\documentclass[11pt]{article}

\usepackage[a4paper,margin=29mm]{geometry}
\usepackage[T1]{fontenc}
\usepackage{lmodern}
\usepackage{microtype}
\usepackage{amsmath,amssymb,amsthm,mathtools}
\usepackage{enumitem}
\usepackage{booktabs,tabularx,array}
\usepackage{xcolor}
\usepackage{hyperref}
\usepackage{fancyhdr}

\definecolor{linkblue}{HTML}{205493}
\hypersetup{colorlinks=true,linkcolor=linkblue,citecolor=linkblue,urlcolor=linkblue}
\setlist{nosep,leftmargin=2em}
\newtheorem{theorem}{Theorem}[section]
\newtheorem{lemma}[theorem]{Lemma}

\newtheorem{proposition}[theorem]{Proposition}
\newtheorem{conjecture}[theorem]{Conjecture}
\theoremstyle{definition}

\theoremstyle{remark}
\newtheorem*{remark}{Remark}

\newcommand{\Ztwo}{\mathbb Z/2}
\newcommand{\CP}{\mathbb{CP}}
\newcommand{\orb}{\mathrm{orb}}

\title{A Seifert Dichotomy for $\Ztwo$-Harmonic One-Forms}
\author{Weifeng Sun\\
\texttt{sunweifeng8@mail.tsinghua.edu.cn}}
\date{}

\begin{document}
\maketitle

\begin{abstract}
Let $Y$ be a closed oriented Seifert fibered rational homology $3$-sphere with a fixed
Seifert fibration $\pi:Y\to B$.  Suppose first that the base orbifold is
oriented, so that $B=S^2(\alpha_1,\ldots,\alpha_r)$.  For any fixed
connection-metric family on $Y$, we prove a sharp threshold: if
$r\le3$, certain metrics admit no nonzero
$\Ztwo$-harmonic one-form.  When $r\ge4$, existing
results give existence for every Riemannian metric.  We also complete the
classification for nonorientable bases.
These results give a complete Seifert
dichotomy and motivate a broader conjecture for non-Seifert rational homology
spheres.
\end{abstract}

\section{Introduction}

A $\Ztwo$-harmonic one-form is a closed and coclosed one-form with values in
a flat real line bundle away from its zero set; locally its representatives
may differ by sign.  Such objects arise in gauge-theoretic compactness
problems and are related to harmonic maps to $\mathbb R$-trees and the
Morgan--Shalen compactification~\cite{Taubes,HWZ}.  Their zero sets may have
codimension two, so ordinary Hodge theory alone does not govern existence.

There is an essential distinction on a rational homology sphere: ordinary
harmonic one-forms always vanish, while a $\Ztwo$-harmonic one-form may carry
a nontrivial sign line.  He--Wentworth--Zhang prove that reducible or Haken
closed oriented three-manifolds nevertheless carry such forms for every
metric~\cite[Theorem~1.3 and Corollary~1.4]{HWZ}.  A metric with no nonzero
$\Ztwo$-harmonic one-form is therefore most naturally sought among
irreducible non-Haken rational homology spheres.

Seifert manifolds provide a structured testing ground for this metric versus
topology question.  A connection metric separates the horizontal geometry
of the base orbifold from the circle direction and permits an adiabatic limit
in which the fibers collapse.  On the existence side, square roots of
orbifold quadratic differentials yield invariant $\Ztwo$-harmonic one-forms.
Related Seifert pullback examples were constructed by
Haydys--Mazzeo--Takahashi~\cite{HMT}, and the orbifold construction used here
is closely related to that of He--Parker~\cite{HeParker}.
He--Parker also express the expectation that Brieskorn homology spheres
$\Sigma(a_1,a_2,a_3)$ admit no nonzero $\Ztwo$-harmonic one-forms
\cite[Section~4.3, following Corollary~4.14]{HeParker}.
The main analytic issue is the converse one: to exclude all, not merely
invariant, $\Ztwo$-harmonic one-forms when the base orbifold has too few cone
points.

\paragraph{Geometric setup.}

Let $Y$ be a closed oriented three-manifold equipped with an effective,
locally free circle action $a:S^1\times Y\to Y$.  Write
$a_t(y)=a(t,y)$, and let
\[
  \pi:Y\longrightarrow B=Y/S^1
\]
be the quotient map.  Here \emph{effective} means that no nonidentity element
of $S^1$ acts trivially on all of $Y$, and \emph{locally free} means that the
stabilizer
\[
  (S^1)_y=\{t\in S^1:a_t(y)=y\}
\]
is finite for every $y\in Y$.  The fibers of $\pi$ are precisely the circle
orbits, so $\pi$ is the Seifert fibration determined by the action.  We assume
that the quotient is an oriented orbifold whose underlying topological
surface is the two-sphere.  We write
\[
  B=S^2(\alpha_1,\ldots,\alpha_r),
  \qquad \alpha_i\ge 2,
\]
to mean that $B$ has distinct cone points $p_1,\ldots,p_r$, with cyclic
isotropy group of order $\alpha_i$ at $p_i$, and has no other orbifold
singularities.  More explicitly, a neighborhood of $p_i$ is represented by
an orbifold chart
\[
  D\longrightarrow D/(\mathbb Z/\alpha_i),
  \qquad
  z\longmapsto [z],
\]
where a generator of $\mathbb Z/\alpha_i$ acts on the disk $D\subset\mathbb C$
by $z\mapsto e^{2\pi i/\alpha_i}z$.  The point $p_i$ is the image of an
exceptional Seifert fiber whose stabilizer has order $\alpha_i$.  Thus $r$
counts the exceptional fibers, while
\[
  B_{\mathrm{reg}}=B\setminus\{p_1,\ldots,p_r\}
\]
is the regular part of the base.

We next fix a smooth orbifold Riemannian metric $h$ on $B$.  This means that
$h$ is an ordinary smooth Riemannian metric on $B_{\mathrm{reg}}$ and that,
in every cone chart $D\to D/(\mathbb Z/\alpha_i)$, its pullback extends to a
smooth $\mathbb Z/\alpha_i$-invariant Riemannian metric on the whole disk
$D$.

Identify $S^1$ with $\mathbb R/(2\pi\mathbb Z)$, and let $\xi$ denote the
infinitesimal generator of the circle action.  Fix an $S^1$-invariant
connection one-form $\eta\in\Omega^1(Y)$ normalized by $\eta(\xi)=1$.

For each $\varepsilon>0$, the data $(h,\eta)$ define the connection metric
\[
  g_\varepsilon=\pi^*h+\varepsilon^2\eta^2,
  \qquad
  \eta^2:=\eta\otimes\eta.
\]
The orbifold smoothness of $h$ and the invariance of $\eta$ imply,
through the local Seifert charts, that $g_\varepsilon$ extends smoothly across
the exceptional fibers.

\begin{theorem}[Main theorem]\label{thm:main}
Let $Y$ be a closed oriented Seifert fibered rational homology $3$-sphere with an
oriented Seifert fibration
\[
  \pi:Y\longrightarrow B=S^2(\alpha_1,\ldots,\alpha_r),
\]
and let $g_\varepsilon=\pi^*h+\varepsilon^2\eta^2$ be any fixed
connection-metric family.  If $r\le 3$, there is
$\varepsilon_0=\varepsilon_0(\pi,h,\eta)>0$ such that
$(Y,g_\varepsilon)$ admits no nonzero $\Ztwo$-harmonic one-form whenever
$0<\varepsilon<\varepsilon_0$.
\end{theorem}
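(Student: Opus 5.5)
We argue by contradiction with an adiabatic-limit compactness argument. Suppose there are $\varepsilon_k\to0$ and nonzero $\Ztwo$-harmonic one-forms $v_k$ on $(Y,g_{\varepsilon_k})$. We normalize them by $\|v_k\|_{L^2(g_{\varepsilon_k})}=1$, or equivalently by the $L^2$ norm with respect to the rescaled volume $\varepsilon_k^{-1}d\mathrm{vol}_{g_{\varepsilon_k}}$. The aim is to extract a nonzero limit object on $B$ and then show that no such object exists when $r\le3$. Write $v_k=\pi^*$(horizontal part)$+f_k\,\eta$, meaning we decompose into horizontal and vertical components and then into Fourier modes along the fibres.

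\textbf{Step 1: the fibrewise nonconstant part decays.} On a regular fibre, the sign line $\mathcal L_k$ restricts either trivially or nontrivially. For modes with nonzero fibre frequency (including the half-integer frequencies coming from a nontrivial fibre holonomy), the fibre Laplacian in $g_\varepsilon$ has size at least about $\varepsilon^{-2}$. A Weitzenb\"ock/Bochner identity for $\Ztwo$-harmonic forms then forces these components to be $O(\varepsilon)$ in $L^2$. We use that $|v|$ is Lipschitz and that the zero locus has finite $\mathcal H^1$-measure, following the regularity of Taubes and HWZ, which is assumed. The key sub-claim is that the sign line is pulled back from $B_{\mathrm{reg}}$ minus a finite branch set. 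A sign line with nontrivial monodromy around the fibres gives no low-energy modes, so for small $\varepsilon$ it cannot carry unit-norm harmonic forms.

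\textbf{Step 2: identify the limit.} The invariant part descends to $B$ as a $\Ztwo$-harmonic one-form $\beta$ on the orbifold $(B,h)$, together with a function $f$ such that $f\eta$ is closed and coclosed modulo $d\eta$ terms. Since $d\eta=\pi^*(e\,\mathrm{dvol}_h)$ and $Y$ is a rational homology sphere, the Euler number is nonzero. Closedness then forces $f\,d\eta$ to be balanced by $d$ of the horizontal part, and coclosedness makes $f$ harmonic with respect to $\mathcal L$. I expect this to give $f\equiv0$ in the limit, possibly after a further order-$\varepsilon$ correction argument. Moreover, $\beta\ne0$ is the real part of a square root of a meromorphic orbifold quadratic differential $q$ on $S^2(\alpha_1,\dots,\alpha_r)$. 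The differential $q$ is holomorphic on $B_{\mathrm{reg}}$, and at each $p_i$ its lift to the cone chart is holomorphic, so its pole order downstairs is at most $1-1/\alpha_i$.

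Strong convergence requires an argument that the $L^2$ mass does not concentrate. This is where the Lipschitz bounds, uniform in $\varepsilon$ after fibrewise averaging, and the frequency gap of Step 1 come in.

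\textbf{Step 3: the orbifold Riemann--Roch count.} The space of orbifold holomorphic quadratic differentials has dimension $\max(0,\,r-3)$. Equivalently, the degree of $2K_B^{\mathrm{orb}}$, namely $2\bigl(-2+\sum_i(1-1/\alpha_i)\bigr)$, is negative for $r\le2$. For $r=3$, the integral part of the divisor gives degree $-4+3=-1<0$. Hence $q=0$, so $\beta=0$, which contradicts the normalisation.

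\textbf{Main obstacle.} The hardest part is Steps 1 and 2 together: proving uniform, non-concentrating convergence despite the singular zero sets, including zero sets that might collapse onto fibres. In particular, one must rule out a sign line that is branched along regular fibres, which in the limit would produce extra branch points on $B$ and so raise the allowed pole count. I would try first to show that $\mathcal L$ is branched only along exceptional fibres for small $\varepsilon$. The approach is a local frequency monotonicity, or Almgren-type argument, near a fibre, combined with the fact that invariant $\Ztwo$-harmonic forms branched at a point of $B_{\mathrm{reg}}$ cost energy of order $\varepsilon^{-1}$. Any additional branch points should then be incorporated into the Riemann--Roch count. Simple poles at $m$ branch points raise the count to $r+m-3$, and one needs to show that the total residue/degree condition still forces vanishing. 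This last point is where I am least certain.
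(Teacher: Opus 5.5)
There is a genuine gap in Step~2, and it sits in exactly the case that actually occurs. Your fibre-decay idea in Step~1 and your count in Step~3 do match ingredients of the paper: the bound $\|\nabla^{\mathrm{split}}_\xi\sigma_\varepsilon\|\le C\varepsilon$ comes from the Weitzenb\"ock identity with $\operatorname{Ric}_{g_\varepsilon}\ge -C$, and the count is $h^0(\mathcal O_{\CP^1}(r-4))=0$.

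The problem is that you expect the Euler number to kill the limiting vertical component. In the adiabatic scaling that coupling is lower order. Write $\sqrt{\varepsilon}\,v=u\,\varepsilon\eta+\beta$. Closedness then reads $du\wedge\varepsilon\eta+\varepsilon u\,\pi^*F+d\beta=0$. The term linking $u$ to $d\eta=\pi^*F$ carries a factor $\varepsilon$, so the limit only satisfies $d_Bu_0=0$ and $d_B\beta_0=d_B^*\beta_0=0$.

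A constant vertical limit $\sigma_\infty=\{\pm c\,e^0\}$ is therefore consistent with every limiting equation. In fact it is the \emph{only} possible limit when $r\le 3$. Once Step~3 excludes $u_0\equiv0$, a nonzero parallel $u_0$ gives $|w_\infty|\ge c$. Since $|w_\infty|$ is continuous and vanishes on $\Sigma_\infty$, the limiting zero set is empty, and $u_0$ trivializes the limiting sign line. Then $\beta_0$ is an honest harmonic one-form on $S^2(\alpha_1,\ldots,\alpha_r)$ and vanishes. Your argument never contradicts this limit.

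Your integration $\int_B fF=-\int_B d\beta'$ only makes sense for a form with empty zero set and trivial sign line. Once you know that, Hodge theory on the rational homology sphere already finishes the proof.

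What is missing is a return from the limit to the sequence. The paper does this in three steps:
\begin{enumerate}[label=(\alph*)]
\item It proves a Taubes-type $C^{0,1/3}$ bound on $|v|$ that is uniform in the metric, zero set and sign line. Hence $|\sqrt{\varepsilon_j}F_j^*v_j|\to c$ locally uniformly on fibre-unwrapped charts, so $Z_j=\varnothing$ for large $j$. Strong $L^2$ convergence alone cannot exclude zeros.
\item It uses the integrated Weitzenb\"ock identity and the explicit Ricci components of $g_\varepsilon$ to get $\|d_{I_j}u_j\|_{L^2}\to0$ while $\|u_j\|_{L^2}\to1$.
\item It applies a uniform spectral gap over the finitely many nontrivial flat real lines, classified by $H^1(Y;\Ztwo)$, to force $I_j$ trivial. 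Then $v_j$ is an ordinary harmonic one-form on a rational homology sphere and vanishes.
\end{enumerate}

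Two further points.

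First, the key sub-claim of Step~1 is not available at finite $\varepsilon$. You claim the sign line of $v_k$ is pulled back from $B$ minus a finite set, but the zero sets $Z_k$ need not be unions of fibres, and your fibrewise Fourier decomposition presupposes this. The paper avoids the issue by passing to the single-valued section $\iota(\sigma)=\sigma\otimes\sigma/|\sigma|$ of $\operatorname{Sym}^2E$. This section is bounded in $W^{1,2}$ with $\|\nabla^{\mathrm{split}}_\xi\iota(\sigma_j)\|_{L^2}\le C\varepsilon_j$, so Rellich gives strong convergence to an $S^1$-invariant limit. Fibre-saturation of the zero set is proved only for that limit, using invariance plus finite $\mathcal H^1$-measure.

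Second, your worry about extra branch points is unfounded. Branch points of a $\Ztwo$-harmonic form are zeros of $q$, not poles. The uniform H\"older bound passes to the limit as $|\beta_0|\le C\,d(x,\Sigma_\infty)^{1/3}$. So $(\beta_0^{1,0})^{\otimes2}$ is bounded near $\Sigma_\infty$ and extends holomorphically, and the relevant space is still $H^0(\mathcal O_{\CP^1}(r-4))=0$. Also, the allowed pole order at $p_i$ is $\lfloor2(1-1/\alpha_i)\rfloor=1$, not $1-1/\alpha_i$; your Step~3 does use the correct count.
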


\begin{proposition}\label{prop:dichotomy}
In fact, when $r\ge 4$, existing results imply that every Riemannian metric
$g$ on $Y$ admits a nonzero $\Ztwo$-harmonic one-form.  Together with
Theorem~\ref{thm:main}, this completes the Seifert dichotomy.
\end{proposition}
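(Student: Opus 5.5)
The plan is to deduce the proposition from the He--Wentworth--Zhang existence theorem~\cite[Theorem~1.3 and Corollary~1.4]{HWZ}. That result says a closed oriented three-manifold which is reducible or Haken carries a nonzero $\Ztwo$-harmonic one-form for every Riemannian metric. It therefore suffices to show: if $Y$ is a Seifert fibered rational homology sphere over $S^2(\alpha_1,\ldots,\alpha_r)$ with $r\ge4$, then $Y$ is Haken, or else reducible. No analysis on $Y$ is needed beyond this citation; the whole content is topological.

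The key step is the classical fact that $Y$ contains an incompressible (vertical) torus. I would choose a simple closed curve $c\subset B_{\mathrm{reg}}$ separating the cone points into two groups $\{p_1,p_2\}$ and $\{p_3,\ldots,p_r\}$. Each complementary disk then contains at least two cone points. Set $T=\pi^{-1}(c)$. This is a torus, since it is a circle bundle over a circle in an oriented total space over an oriented base, so it is orientable.

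Cutting $Y$ along $T$ gives two Seifert fibered pieces $M_1,M_2$. Each fibers over a disk with at least two cone points. Such a piece is not a solid torus, since a solid torus has at most one exceptional fiber, and it is irreducible. Its boundary torus is incompressible, because the base orbifold is a disk with at least two cone points and so has infinite orbifold fundamental group containing the boundary loop with infinite order; $\pi_1$ of the boundary torus then injects. It follows that $T$ is incompressible in $Y$.

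I also need that $Y$ itself is irreducible, or else I fall into the reducible case. Both alternatives are covered by \cite{HWZ}. I would cite the standard fact that Seifert manifolds other than $S^2\times S^1$ and $\mathbb{RP}^3\#\mathbb{RP}^3$ are irreducible, and these two are excluded or harmless. Hence $Y$ is Haken, and \cite{HWZ} gives existence for every metric $g$.

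The main obstacle is the incompressibility argument, specifically making the orbifold fundamental group claim precise: the boundary curve of $D^2(\alpha_1,\alpha_2)$ has infinite order in $\mathbb Z/\alpha_1*\mathbb Z/\alpha_2$. I would handle this by citing the standard classification, for instance Jaco's or Hatcher's notes, rather than proving it. I would also remark that the argument never uses the rational homology sphere hypothesis, except to place $Y$ in the paper's setting.

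Combined with Theorem~\ref{thm:main} for $r\le3$, this yields the dichotomy. For $r\le 3$ there exist metrics with no nonzero $\Ztwo$-harmonic one-form, while for $r\ge 4$ every metric admits one.
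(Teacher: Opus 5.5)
Your argument is correct and follows the paper's route: show that $Y$ is Haken (or, failing irreducibility, reducible) and then invoke the every-metric existence theorem of He--Wentworth--Zhang. The only difference is that the paper cites Detcherry--Kalfagianni--Sikora~\cite[Proposition~3.1]{DKS} for the Haken property, whereas you supply the standard vertical-torus argument yourself, splitting the cone points into a group of two and a group of at least two. That argument is fine, but in the $\pi_1$-injectivity step you should also note that the regular fiber has infinite order in $\pi_1(M_i)$; the infinite order of the boundary loop in the orbifold group alone does not give injectivity.
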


\begin{proof}
Detcherry--Kalfagianni--Sikora prove that a closed Seifert manifold fibering
over $S^2$ with at least four exceptional fibers is
Haken~\cite[Proposition~3.1]{DKS}.  Since $Y$ is a rational homology sphere,
the every-metric existence theorem of He--Wentworth--Zhang then applies and
gives a nontrivial $\Ztwo$-harmonic one-form for every Riemannian metric on
$Y$~\cite[Theorem~1.3]{HWZ}.
\end{proof}

In fact, for the connection-metric family considered above, one has the
following more refined, symmetry-preserving existence statement.

\begin{theorem}
\label{thm:invariant-existence}
Under the geometric setup above, suppose that $r\ge 4$.
Then, for every $\varepsilon>0$, the Riemannian manifold
$(Y,g_\varepsilon)$ admits a nonzero $S^1$-invariant
$\Ztwo$-harmonic one-form.
\end{theorem}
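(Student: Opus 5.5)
The construction will take the square root of a holomorphic quadratic differential on the base orbifold and pull it back. For $r\ge4$ the orbifold $B=S^2(\alpha_1,\ldots,\alpha_r)$ has many meromorphic quadratic differentials on $\CP^1$ with at most simple poles at cone points. The first step is therefore to produce a nonzero \emph{orbifold-holomorphic} quadratic differential $q$ on $B$ with respect to the conformal structure of $h$; by uniformization, $(B,[h])$ is $\CP^1$ with marked points $p_1,\dots,p_r$. Concretely, in a cone chart $w=z^{\alpha_i}$ one needs $q$ to pull back to a holomorphic $\mathbb Z/\alpha_i$-invariant quadratic differential on $D$. Since $dw=\alpha_i z^{\alpha_i-1}dz$, a simple pole $w^{-1}dw^2$ pulls back to $\alpha_i^2 z^{\alpha_i-2}dz^2$, which is holomorphic because $\alpha_i\ge2$. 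So the admissible space is $H^0(\CP^1,K^2(p_1+\cdots+p_r))$, which has dimension $r-3\ge1$ by Riemann--Roch. We then pick any nonzero $q$, for instance
\[
q=\frac{dw^2}{(w-p_1)(w-p_2)(w-p_3)(w-p_4)}
\]
with a cone point sent away from $\infty$.

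Next, on $B$ (in charts, on $D$) we set $\omega_B=\mathrm{Re}\sqrt{q}$. This is a two-valued real one-form defined away from the zeros of $q$ (in orbifold charts). It is closed and coclosed for $h$, because locally $\sqrt q=d f$ with $f$ holomorphic and $\mathrm{Re}\,df$ is harmonic, and harmonicity of one-forms in two dimensions depends only on the conformal class. Near a zero of odd order of the pulled-back differential, the form has the standard branched $\Ztwo$ behaviour with $|\omega_B|$ vanishing like $|z|^{k/2}$, $k\ge1$. Thus $\omega_B$ is an orbifold $\Ztwo$-harmonic one-form with zero set a finite set $Z_B$ of (orbifold) points.

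Then we define $\omega=\pi^*\omega_B$ on $Y$, with zero set $\pi^{-1}(Z_B)$, a finite union of fibers and hence of codimension two. The sign line bundle is the pullback of that of $\omega_B$. Clearly $\omega$ is $S^1$-invariant, closed (pullback commutes with $d$), and nonzero. For coclosedness with $g_\varepsilon=\pi^*h+\varepsilon^2\eta^2$, we compute locally in an orthonormal coframe $e^1,e^2$ (pulled back from $B$) and $\varepsilon\eta$. Horizontal forms satisfy $*_{g_\varepsilon}\pi^*\beta=\varepsilon\,\pi^*(*_h\beta)\wedge\eta$ up to sign, so
\[
d*_{g_\varepsilon}\omega=\pm\varepsilon\bigl(\pi^*(d*_h\omega_B)\wedge\eta-\pi^*(*_h\omega_B)\wedge d\eta\bigr).
\]
The first term vanishes by harmonicity on $B$. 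The second is a $3$-form in which $*_h\omega_B$ and $d\eta$ are both horizontal, and $d\eta$ is a horizontal $2$-form since $\eta$ is an invariant connection with $\iota_\xi d\eta=\mathcal L_\xi\eta-d(\eta(\xi))=0$. A product of a horizontal $1$-form and a horizontal $2$-form on a $2$-dimensional horizontal distribution vanishes. Hence $d^*\omega=0$ for every $\varepsilon>0$.

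The main obstacle is the orbifold bookkeeping at the exceptional fibers. We must check that $\pi^*\omega_B$ is smooth (as a $\Ztwo$-form) across the exceptional fibers and that, near zeros lying on exceptional fibers, it satisfies the required regularity/growth conditions, e.g. $|\omega|$ continuous and $\nabla\omega\in L^2$ as in the definitions of \cite{HeParker,HWZ}. I would verify this in the local Seifert model $(D\times S^1)/(\mathbb Z/\alpha_i)$. There $\omega$ lifts to $\mathrm{Re}\sqrt{\tilde q}$ with $\tilde q=\alpha_i^2z^{\alpha_i-2}(\cdots)dz^2$ holomorphic, so its local behaviour is exactly that of a standard branched $\Ztwo$-harmonic form on $D\times S^1$, and all needed properties descend under the free local quotient.
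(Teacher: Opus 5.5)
Your proposal is correct and follows the same route as the paper. You choose a nonzero element of $H^0(\CP^1,K_{\CP^1}^{\otimes2}(p_1+\cdots+p_r))\cong H^0(\CP^1,\mathcal O_{\CP^1}(r-4))$, take $\operatorname{Re}\sqrt q$, pull it back by $\pi$, and check the local behaviour in the cyclic Seifert charts; your explicit coclosedness check via $*_{g_\varepsilon}\pi^*\beta=\varepsilon\,\pi^*(*_h\beta)\wedge\eta$ and the horizontality of $d\eta$ supplies what the paper calls clear, while the paper is more explicit than you about the case $m=\alpha-2+\alpha\nu=0$ (an order-two cone point at a simple pole, where the form is nonzero and the sign line extends across the exceptional fiber) and about the zero-growth bound $O(\rho^{m+3})$, both of which you cover only implicitly through the local cover.
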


Theorem~\ref{thm:invariant-existence} is not essentially new.  The
corresponding invariant forms are already constructed by
He--Parker~\cite[Proposition~1.13, Lemma~4.12, and
Proposition~4.13]{HeParker}.  Although their statement uses a normalized
constant-curvature connection metric, the same pullback construction is
independent of the fiber scale and gives the formulation above for any fixed
connection-metric family.  A proof of
Theorem~\ref{thm:invariant-existence} is given in
Section~\ref{sec:invariant-existence}.

We also have a corresponding dichotomy in the nonorientable-base case.

\begin{proposition}
\label{prop:nonorientable-dichotomy}
Let $Y$ be a closed oriented Seifert fibered rational homology $3$-sphere with a fixed
Seifert fibration over a closed nonorientable base orbifold.  Then the
underlying surface of the base is $\mathbb{RP}^2$; write
$B=\mathbb{RP}^2(\alpha_1,\ldots,\alpha_s)$, where $s$ is the number of
exceptional fibers.  When $s=0$, let $b\in\mathbb Z$ denote the integer
Seifert invariant (obstruction class) of the circle bundle
$Y\to\mathbb{RP}^2$.  Let $p:\widehat Y\to Y$ be the double cover orienting the vertical
tangent line, and suppose that the metric family is fiber-orientation-adapted,
in the sense that
\[
  p^*g_\varepsilon
  =\widehat\pi^*\widehat h+\varepsilon^2\widehat\eta^2
\]
for fixed data on the induced oriented Seifert fibration
$\widehat\pi:\widehat Y\to\widehat B$.  Then:
\begin{enumerate}[label=\textup{(\roman*)}]
\item if $s=0$ and $b=0$, then
  $Y\cong\mathbb{RP}^3\#\mathbb{RP}^3$, and every Riemannian metric on $Y$
  admits a nonzero $\Ztwo$-harmonic one-form;
\item if either $s=0$ and $b\ne0$, or $s=1$, then there exists
  $\varepsilon_0>0$ such that $(Y,g_\varepsilon)$ admits no nonzero
  $\Ztwo$-harmonic one-form for $0<\varepsilon<\varepsilon_0$;
\item if $s\ge2$, then $Y$ is Haken, and every Riemannian metric on $Y$
  admits a nonzero $\Ztwo$-harmonic one-form.
\end{enumerate}
\end{proposition}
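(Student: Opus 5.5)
We begin by identifying the base and then treat the three cases separately. If the base orbifold is nonorientable with underlying surface $F$, then $H_1(Y;\mathbb Q)$ surjects onto $H_1^{\orb}(B;\mathbb Q)\cong H_1(F;\mathbb Q)$, since the fiber class is torsion or zero after passing to the abelianized orbifold group. So $b_1(F)=0$, and the only closed nonorientable surface with this property is $\mathbb{RP}^2$; this gives $B=\mathbb{RP}^2(\alpha_1,\ldots,\alpha_s)$.

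\textbf{Case (iii), $s\ge2$.} We invoke the standard Haken criterion. A Seifert manifold over $\mathbb{RP}^2$ with at least two cone points contains a vertical incompressible torus. It is the preimage of a simple closed curve bounding a disk with two cone points, and the complement of that disk is a Möbius band carrying the remaining cone points. Equivalently, we cite \cite[Proposition~3.1]{DKS} or the corresponding classical statement. Then \cite[Theorem~1.3]{HWZ} applies exactly as in Proposition~\ref{prop:dichotomy}.

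\textbf{Case (i), $s=0$ and $b=0$.} Here $Y$ is the nonorientable-base circle bundle with zero Euler number, and the standard identification is $Y\cong\mathbb{RP}^3\#\mathbb{RP}^3$. This space is reducible, so \cite[Corollary~1.4]{HWZ} gives existence for every metric.

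\textbf{Case (ii), $s=0$ with $b\ne0$, or $s=1$.} This is the analytic case, and the plan is to reduce it to Theorem~\ref{thm:main} on the double cover $\widehat Y$. Over $\mathbb{RP}^2(\alpha)$, or over $\mathbb{RP}^2$ itself, the fiber-orienting cover $\widehat Y$ is Seifert over the orientation double cover $\widehat B$, so $\widehat B=S^2(\alpha,\alpha)$ or $S^2$. Thus $\widehat B$ has $r\le2\le3$ cone points. We then need three facts.
\begin{enumerate}
\item $\widehat Y$ is a rational homology sphere. We check this from the Seifert invariants: the rational Euler number of $\widehat Y$ is twice that of $Y$, and it must be nonzero. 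When $s=0$ this nonvanishing is exactly the condition $b\ne0$. When $s=1$ it is automatic, because the contribution $\beta/\alpha$ is not an integer.
\item The lifted metric is a connection-metric family. This is exactly the adaptedness hypothesis.
\item Lifting preserves the property. If $q$ is a nonzero $\Ztwo$-harmonic one-form on $(Y,g_\varepsilon)$, then $p^*q$ is a nonzero $\Ztwo$-harmonic one-form on $(\widehat Y,p^*g_\varepsilon)$. The equations are local, $p$ is a local isometry, and the sign line pulls back to a flat line bundle.
\end{enumerate}
Theorem~\ref{thm:main} applied to $\widehat\pi$ with data $(\widehat h,\widehat\eta)$ then yields $\varepsilon_0$ such that no such form exists upstairs. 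Hence none exists on $Y$.

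\textbf{Main obstacle.} We expect the subtle step to be the fiber-orientation bookkeeping. We must verify that $\widehat Y$ is oriented and that $\widehat\pi$ is an oriented Seifert fibration, so that Theorem~\ref{thm:main} applies in its stated form. We must also verify that the rational-homology-sphere condition on $\widehat Y$ reduces precisely to $b\ne0$ when $s=0$. If the Euler number computation under the cover is delicate, we would instead use the criterion $b_1(\widehat Y)=b_1(\widehat B)+\bigl(1\text{ if }e(\widehat Y)=0\bigr)$. This requires only $e(\widehat Y)=2e(Y)\ne0$, and in case (i) that vanishing is consistent with $e(Y)=0$ giving $\widehat Y\cong S^2\times S^1$.
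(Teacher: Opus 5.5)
Your proposal is correct and follows the paper's proof essentially step for step. Parts (i) and (iii) go by citing the classification of $\mathbb{RP}^3\#\mathbb{RP}^3$ and the Haken criterion, then He--Wentworth--Zhang. Part (ii) goes by passing to the fiber-orienting double cover over $S^2$ or $S^2(\alpha,\alpha)$, checking $e(\widehat Y)=2e(Y)\neq0$ so that $\widehat Y$ is a rational homology sphere with $r\le2$, applying Theorem~\ref{thm:main} there, and pulling back any putative form. The only variation is that you identify the base through the surjection $H_1(Y;\mathbb Q)\to H_1^{\orb}(B;\mathbb Q)\cong H_1(|B|;\mathbb Q)$ rather than by abelianizing the Jankins--Neumann presentation. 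That surjection is equally valid, but it comes from surjectivity of $\pi_1(Y)\to\pi_1^{\orb}(B)$, not from the fiber class being torsion as you phrase it.
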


Parts~\textup{(i)} and~\textup{(iii)} are not new.  For~\textup{(i)},
Hatcher's classification identifies the zero-obstruction Seifert fibration
over $\mathbb{RP}^2$ without genuine exceptional fibers with
$\mathbb{RP}^3\#\mathbb{RP}^3$~\cite{Hatcher}.
This manifold is reducible, so the every-metric existence statement follows
from He--Wentworth--Zhang~\cite[Corollary~1.4]{HWZ}.  For~\textup{(iii)},
Detcherry--Kalfagianni--Sikora prove that an oriented closed Seifert manifold
over $\mathbb{RP}^2$ with at least two genuine exceptional fibers is
Haken~\cite[Proposition~3.1]{DKS}; the same corollary of
He--Wentworth--Zhang again gives the asserted form for every Riemannian
metric.  The proof of~\textup{(ii)} is given in
Section~\ref{sec:nonorientable-nonexistence}.

\paragraph{A possible dichotomy for all closed oriented three-manifolds.}
For an arbitrary closed connected oriented three-manifold $M$, the relevant
topological cases can be organized as follows.
\begin{enumerate}[label=\textup{(\roman*)}]
\item If $b_1(M)>0$, Hodge theory gives a nonzero ordinary harmonic one-form
  for every Riemannian metric, hence a nonzero $\Ztwo$-harmonic one-form with
  trivial sign line.
\item If $b_1(M)=0$ and $M$ is reducible or Haken, then
  He--Wentworth--Zhang prove that every Riemannian metric admits a nonzero
  $\Ztwo$-harmonic one-form~\cite[Corollary~1.4]{HWZ}.
\item If $M$ is an irreducible non-Haken Seifert fibered rational homology
  $3$-sphere,
  then the results of this paper produce a Riemannian metric with no nonzero
  $\Ztwo$-harmonic one-form.  For an oriented base this follows from
  Theorem~\ref{thm:main}; for a nonorientable base it follows from
  Proposition~\ref{prop:nonorientable-dichotomy}.
\item By geometrization~\cite{AFW}, the remaining unresolved class consists of closed non-Haken hyperbolic
  rational homology $3$-spheres.
\end{enumerate}
We conjecture the complementary metric-existence statement for the fourth
case.

\begin{conjecture}
\label{conj:nonseifert-form-free}
Every closed connected oriented non-Haken hyperbolic rational
homology three-sphere $M$ admits a Riemannian metric $g$ for which $(M,g)$
has no nonzero $\Ztwo$-harmonic one-form.
\end{conjecture}

Separately, He--Wentworth--Zhang~\cite[Conjecture~1.6]{HWZ}
conjecture that, for every Riemannian metric $g$ on $S^3$,
$(S^3,g)$ admits no nonzero $\Ztwo$-harmonic one-form.

\paragraph{Acknowledgements.}
The author is deeply grateful to Clifford Taubes for his pioneering work
on $\Ztwo$-harmonic forms and spinors, and for his generous guidance and
support when the author was a Ph.D. student.  The author also thanks Siqi He,
Jiahuang Chen, and other colleagues for helpful discussions.  This work
was supported by start-up funding from Tsinghua University.

\paragraph{Disclosure of AI usage.}
This paper was prepared in four stages.

First, the human author formulated the research questions and supplied the
overall direction and main ideas.  GPT-5.6 Sol was then tasked with
developing the proofs, and the proposed proofs were reviewed using the
Rethlas verifier.  This stage consumed approximately 65\% of the weekly
usage allowance of a $5\times$ Pro subscription.  Rethlas reported a pass
in its automated LLM review of the informal proofs.  The process produced
an initial AI-generated draft of roughly 100 pages, which was highly
verbose and difficult to read.

Second, the author reorganized and reconstructed the proof.  When details
blocked this reconstruction, the author asked GPT in a chat interface to
locate and explain the relevant arguments in the initial draft.  Through
this reconstruction, the author checked the argument largely by hand and
concluded that the proof could be made rigorous.

Third, the author rewrote the paper with assistance from Codex.  The author
designed the framework and logical organization, while Codex mainly
assisted with English expression and filling in details under the author's
control.

Fourth, the author refined the wording and presentation sentence by
sentence.

Throughout the entire process, the author read very little of
the initial AI-generated draft directly.  Nevertheless, the author was
greatly encouraged by the outcome of the first stage and, by its end,
thought that the argument should be likely to work.

The human author takes full responsibility for all claims and conclusions
in this paper.

\section{The invariant construction and the main proof strategy}
\label{sec:invariant-existence}

\subsection{Proof of Theorem~\ref{thm:invariant-existence}}

We first recall the analytic objects involved.  A $\Ztwo$-harmonic one-form
on a Riemannian three-manifold $(Y,g)$ is a triple $(Z,I,v)$, where $Z$ is a
closed proper subset, $I\to Y\setminus Z$ is a flat Euclidean real line bundle, and
\[
  v\in\Omega^1(Y\setminus Z;I),
  \qquad d_Iv=0,
  \qquad d_I^*v=0.
  \tag{2.1}
\]
We require $v$ to be nowhere zero on $Y\setminus Z$ and $|v|$ to extend
continuously to $Y$, with zero set exactly $Z$.
In addition, $v$ and $\nabla v$ are locally square-integrable and the standard
zero-growth condition holds: for some $C_0,\delta,r_0>0$,
\[
  \int_{B_\rho(x)}|v|^2\,d\operatorname{vol}_g
  \le C_0\rho^{3+\delta}
  \qquad (x\in Z,\ 0<\rho<r_0).
  \tag{2.2}
\]
This is the formulation of~\cite[Definition~3.1]{HWZ} in terms of the flat
sign line $I$.  We regard $(Z,I,v)$ as a two-valued section of
$T^*Y/\{\pm1\}$.  It is $S^1$-invariant if $Z$ is invariant and this
quotient-valued section is fixed by the Seifert circle action.

The construction used below starts with the following elementary description
of orbifold quadratic differentials.  Let $p_1,\ldots,p_r$ also denote the
marked points on the coarse sphere $|B|\cong\CP^1$.

\begin{lemma}\label{lem:orbifold-quadratic-differentials}
There is a natural isomorphism
\[
  H^0_{\orb}\!\left(B,K_{B,\orb}^{\otimes2}\right)
  \cong
  H^0\!\left(\CP^1,
    K_{\CP^1}^{\otimes2}(p_1+\cdots+p_r)\right).
  \tag{2.3}
\]
Consequently, if $r\ge4$, then
\[
  \dim_{\mathbb C}H^0_{\orb}\!\left(B,K_{B,\orb}^{\otimes2}\right)
  =r-3.
  \tag{2.4}
\]
\end{lemma}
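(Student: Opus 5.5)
The plan is to work locally near each cone point and compare invariant holomorphic sections in the orbifold chart with meromorphic sections on the coarse curve.  Away from the cone points the orbifold canonical bundle is the ordinary canonical bundle of $B_{\mathrm{reg}}$.  So both sides of~(2.3) agree on $\CP^1\setminus\{p_1,\ldots,p_r\}$, and the isomorphism reduces to identifying the local extension conditions at each $p_i$.

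Fix a cone chart $D\to D/(\mathbb Z/\alpha)$ with coordinate $z$, and let $w=z^\alpha$ be the holomorphic coordinate on the coarse space near $p_i$.  An orbifold quadratic differential near $p_i$ is a $\mathbb Z/\alpha$-invariant holomorphic expression $f(z)\,dz^2$ on $D$.  Under $z\mapsto \zeta z$ with $\zeta=e^{2\pi i/\alpha}$, the factor $dz^2$ picks up $\zeta^2$.  Hence invariance says that $f(z)=\sum a_n z^n$ has $a_n\neq0$ only when $n\equiv -2 \pmod\alpha$.

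Now rewrite in the coarse coordinate.  From $dw=\alpha z^{\alpha-1}dz$ we get
\[
  dz^2=\alpha^{-2}z^{2-2\alpha}\,dw^2 .
\]
The term $z^n\,dz^2$ therefore becomes $\alpha^{-2}z^{n+2-2\alpha}\,dw^2$.  Since $n+2=k\alpha$ with $k\ge1$ (because $n\ge0$), this equals $\alpha^{-2}w^{k-2}\,dw^2$ with $k\ge1$.  Thus invariant holomorphic sections correspond exactly to meromorphic quadratic differentials in $w$ with at most a simple pole at $w=0$, that is, to local sections of $K^{\otimes2}(p_i)$.

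This correspondence is bijective termwise, using $\alpha\ge2$ so that every $k\ge1$ occurs.  It is also compatible with changes of chart, so the local identifications glue with the identity on $B_{\mathrm{reg}}$.  This gives the natural isomorphism~(2.3).

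For the dimension count~(2.4), note that $K_{\CP^1}^{\otimes2}(p_1+\cdots+p_r)\cong\mathcal O(r-4)$.  When $r\ge4$ this has degree at least $0$, so by Riemann--Roch on $\CP^1$ (or directly) its space of sections has dimension $r-4+1=r-3$.

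The main point needing care is the local exponent bookkeeping, namely checking that the pole order is exactly one independently of $\alpha$.  The rest is routine, though one should verify that the orbifold holomorphic structure used for $K_{B,\orb}$ is the one induced by $h$ and the orientation, so that the coarse space is a genuine $\CP^1$.
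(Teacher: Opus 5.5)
Your proposal is correct and follows essentially the same route as the paper. The paper likewise uses invariance in the uniformizing coordinate to write $\widetilde q=z^{\alpha-2}H(z^\alpha)(dz)^2$, which is equivalent to your exponent condition $n\equiv-2\pmod{\alpha}$, rewrites this as $H(w)\,(\alpha^2w)^{-1}(dw)^2$ via $w=z^\alpha$ to obtain at most a simple pole, and then counts the sections of $\mathcal O_{\CP^1}(r-4)$.
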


\begin{proof}
Near a cone point of order $\alpha$, let $z$ be the coordinate on the local
covering disk $D$, and let $w=z^\alpha$ be the coordinate on the quotient
$D/(\mathbb Z/\alpha)$.  Invariance forces a
holomorphic orbifold quadratic differential to have the form
\[
  \widetilde q=z^{\alpha-2}H(z^\alpha)(dz)^2
\]
for a holomorphic function $H$.  In the quotient coordinate $w$, this becomes
\[
  q=\frac{H(w)}{\alpha^2w}(dw)^2.
  \tag{2.5}
\]
Thus an orbifold quadratic differential is precisely a quadratic differential
on $\CP^1$ with at most a simple pole at each marked point.  This proves
\textup{(2.3)}.  The line bundle on the right is
$\mathcal O_{\CP^1}(r-4)$, whose space of sections has dimension $r-3$ when
$r\ge4$.
\end{proof}

We can now prove Theorem~\ref{thm:invariant-existence}.

\begin{proof}[Proof of Theorem~\ref{thm:invariant-existence}]
The oriented metric $h$ determines a complex structure on the orbifold $B$.
By Lemma~\ref{lem:orbifold-quadratic-differentials}, choose
\[
  0\ne q\in H^0_{\orb}\!\left(B,K_{B,\orb}^{\otimes2}\right).
\]
Let $\Sigma_q\subset B$ be the finite set where $f(z)=0$ in the local
expression $\widetilde q=f(z)(dz)^2$, using the covering coordinate $z$ at
cone points.  Clearly, $\operatorname{Re}\sqrt q$ is a
$\Ztwo$-harmonic one-form on $B^\circ=B\setminus\Sigma_q$.

We record what happens at a cone point.  In the notation of the preceding
proof, write
\[
  q=\frac{F(w)}{w}(dw)^2,
  \qquad \nu=\operatorname{ord}_0F.
\]
In the covering coordinate $z$, its pullback is
\[
  \widetilde q=\alpha^2z^mG(z)(dz)^2,
  \qquad
  m=\alpha-2+\alpha\nu\ge0,
  \qquad G(0)\ne0.
  \tag{2.7}
\]
If $m=0$, then necessarily $\alpha=2$ and $\nu=0$, and the pullback of the
square-root line to the smooth Seifert chart extends across the exceptional
core.  If $m>0$, the cone point belongs to
$\Sigma_q$, and the square-root line is considered on the punctured
neighborhood.  At every zero of order $m>0$, in the local coordinate $z$
(the covering coordinate at a cone point),
\[
  |\operatorname{Re}\sqrt q|^2=O(|z|^m),
  \qquad
  |\nabla(\operatorname{Re}\sqrt q)|^2=O(|z|^{m-2}).
  \tag{2.8}
\]

Define
\[
  Z_q=\pi^{-1}(\Sigma_q),
  \qquad v_q=\pi^*(\operatorname{Re}\sqrt q).
  \tag{2.9}
\]
Let $I_q$ be the flat sign line associated with $v_q$ on $Y\setminus Z_q$.
The cyclic action in each local Seifert chart acts on the two square roots of
$q$, so $(I_q,v_q)$ descends to the smooth total space away from $Z_q$; the
$m=0$ case above extends across the exceptional core.  If $x$ lies on a zero
fiber of order $m\ge1$, then~\textup{(2.8)} gives
\[
  \nabla v_q\in L^2_{\mathrm{loc}},
  \qquad
  \int_{B_\rho(x)}|v_q|^2\,d\operatorname{vol}_{g_\varepsilon}
  =O(\rho^{m+3}).
  \tag{2.10}
\]
These estimates imply the conditions in~\textup{(2.2)}.

Clearly, $v_q$ is the $\Ztwo$-harmonic one-form that we want.
This proves Theorem~\ref{thm:invariant-existence}.
\end{proof}

\subsection{Outline of the proof of Theorem~\ref{thm:main}}

We now assume that $r\le3$ and outline the proof of
Theorem~\ref{thm:main}.  For $0<\varepsilon\le1$, set
\[
  d\mu=\eta\wedge\pi^*dA_h
       =\varepsilon^{-1}d\operatorname{vol}_{g_\varepsilon},
\]
and introduce the fixed bundle
\[
  E=\underline{\mathbb R}e^0\oplus\pi^*_{\orb}T^*B.
\]
Here $e^0$ denotes the standard unit section of the trivial real line bundle
over $Y$.
Give $E$ the product metric for which $e^0$ is a unit section, and set
$g_{\mathrm{ref}}=\pi^*h+\eta^2$.  We use the fixed split metric connection
$\nabla^{\mathrm{split}}$ on $E$ determined by
\[
  \nabla^{\mathrm{split}}e^0=0,
  \qquad
  \nabla^{\mathrm{split}}_{X^H}(\pi^*\alpha)=\pi^*(\nabla_X^h\alpha),
  \qquad
  \nabla^{\mathrm{split}}_\xi(\pi^*\alpha)=0,
\]
where $\nabla^h$ is the Levi--Civita connection of $h$, $\alpha$ is a base
one-form, and the superscript $H$ denotes horizontal lift with respect to
$\eta$: for a base vector field $X$, its lift $X^H$ satisfies
$\pi_*X^H=X$ and $\eta(X^H)=0$.

Let $\Phi_\varepsilon:E\to T^*Y$ be the fiberwise isometry determined by
$\Phi_\varepsilon(e^0)=\varepsilon\eta$ and by the identity on horizontal
covectors.  The rescaled connection on $E$ is defined by
\[
 \nabla^{\varepsilon,\mathrm{rescaled}}\sigma
 :=\Phi_\varepsilon^{-1}
   \bigl(\nabla^{g_\varepsilon}(\Phi_\varepsilon\sigma)\bigr).
\]
If $(Z_\varepsilon,I_\varepsilon,v_\varepsilon)$ is normalized by
\[
  \int_Y|v_\varepsilon|^2\,
  d\operatorname{vol}_{g_\varepsilon}=1,
\]
define
\[
  \sigma_\varepsilon=\Phi_\varepsilon^{-1}(\sqrt{\varepsilon}\,v_\varepsilon)
  =u_\varepsilon e^0+\beta_\varepsilon.
  \tag{2.11}
\]
Then $\|\sigma_\varepsilon\|_{L^2(d\mu)}=1$.

For a flat Euclidean real line bundle $I$, we use the same notation for
the connections on $E\otimes I$ obtained by tensoring with the flat
connection on $I$, wherever $I$ is defined; the meaning is determined by
context.  Directional subscripts denote covariant differentiation; in
particular, $\nabla^{\mathrm{split}}_\xi\sigma_\varepsilon$ is the derivative
in the fiber direction.

\begin{proposition}
\label{prop:uniform-adiabatic-estimate}
There is a constant $C=C(\pi,h,\eta)$ such that every normalized
$\Ztwo$-harmonic one-form satisfies
\[
  \int_Y\left(|\sigma_\varepsilon|^2+
  |\nabla^{\mathrm{split}}\sigma_\varepsilon|_{g_{\mathrm{ref}}}^2\right)d\mu\le C,
  \qquad
  \int_Y|\nabla^{\mathrm{split}}_\xi\sigma_\varepsilon|^2d\mu
  \le C\varepsilon^2.
  \tag{2.12}
\]
The constants are independent of the zero set $Z_\varepsilon$ and of the
flat sign line $I_\varepsilon$.
\end{proposition}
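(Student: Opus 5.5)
The estimate should come from a Weitzenböck (Bochner) identity for $\mathbb Z/2$-harmonic forms on $(Y,g_\varepsilon)$, rewritten in the rescaled frame $E$. One then shows that the fiber-collapsing curvature terms contribute only $O(\varepsilon^2)$ relative to the fiber derivative.

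\textbf{Step 1: the integrated Weitzenböck identity.} For $v_\varepsilon$ closed and coclosed away from $Z_\varepsilon$,
\[
  \int_Y\bigl(|\nabla^{g_\varepsilon}v_\varepsilon|^2+\operatorname{Ric}_{g_\varepsilon}(v_\varepsilon,v_\varepsilon)\bigr)\,d\operatorname{vol}_{g_\varepsilon}=0 .
\]
To justify the integration by parts across $Z_\varepsilon$, I would use cutoffs adapted to the zero set. One standard route is the approach of Taubes and He--Wentworth--Zhang: the zero-growth condition (2.2) together with $\nabla v\in L^2_{\mathrm{loc}}$ makes the boundary terms vanish. The sign of $v$ is irrelevant since $|v|^2$ and $|\nabla v|^2$ are well defined, so the constants do not see $I_\varepsilon$ or $Z_\varepsilon$.

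\textbf{Step 2: rescaling and splitting.} Multiply the identity by $\varepsilon$, so that $\varepsilon\,d\operatorname{vol}_{g_\varepsilon}$ becomes $\varepsilon^2\,d\mu$ and the normalisation gives $\|\sigma_\varepsilon\|_{L^2(d\mu)}=1$. Transport everything through $\Phi_\varepsilon$. The O'Neill formulas for $g_\varepsilon=\pi^*h+\varepsilon^2\eta^2$ give
\[
  \nabla^{\varepsilon,\mathrm{rescaled}}=\nabla^{\mathrm{split}}+A_\varepsilon ,
\]
where $A_\varepsilon$ is algebraic, built from $d\eta$ and $\varepsilon$. Its horizontal part is $O(\varepsilon)$. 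Its vertical part, measured against the $g_\varepsilon$-unit vector $\varepsilon^{-1}\xi$, is $O(1)$.

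The Ricci tensor in the rescaled frame is
\[
  \operatorname{Ric}(e_i,e_j)=\operatorname{Ric}_h-\tfrac{\varepsilon^2}{2}|F|^2\delta_{ij},
  \qquad
  \operatorname{Ric}(e^0,e^0)=\tfrac{\varepsilon^2}{4}|F|^2,
\]
with $F=d\eta$ viewed as a base form. In particular it is bounded uniformly in $\varepsilon$.

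The gradient splits as
\[
  |\nabla^{g_\varepsilon}v|^2=|\nabla^{\mathrm{hor}}\sigma|^2+\varepsilon^{-2}\bigl|\nabla^{\mathrm{split}}_\xi\sigma+\varepsilon B\sigma\bigr|^2 ,
\]
up to an $O(\varepsilon)$ algebraic correction in the horizontal part, where $B$ is a fixed bundle map coming from $d\eta$. Substituting, one obtains
\[
  \int\Bigl(|\nabla^{\mathrm{hor}}\sigma|^2+\varepsilon^{-2}|\nabla_\xi\sigma+\varepsilon B\sigma|^2\Bigr)d\mu\le C\int|\sigma|^2\,d\mu .
\]

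\textbf{Step 3: extracting the bounds.} The last display immediately gives the first bound in (2.12), after absorbing the $O(\varepsilon)$ terms. It also gives
\[
  \|\nabla_\xi\sigma+\varepsilon B\sigma\|_{L^2(d\mu)}\le C\varepsilon ,
\]
and since $\|\varepsilon B\sigma\|\le C\varepsilon$ the triangle inequality yields the second bound $\int|\nabla^{\mathrm{split}}_\xi\sigma|^2\,d\mu\le C\varepsilon^2$. The constants depend only on $h$, $\eta$ and their derivatives through curvature and $d\eta$, and these are bounded on the compact orbifold. Near exceptional fibers I would carry out the computation in local Seifert charts; the formulas are $\mathbb Z/\alpha$-equivariant and $\pi^*_{\orb}T^*B$ is smooth there, so the same bounds hold.

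\textbf{Main obstacle.} The hardest step is Step 2. One must verify that the $\varepsilon^{-1}$-sized vertical connection terms appear only inside the combination $\varepsilon^{-1}(\nabla_\xi\sigma+\varepsilon B\sigma)$, so that no stray cross term of size $\varepsilon^{-1}\langle\nabla_\xi\sigma,\sigma\rangle$ survives. Note that $\langle\nabla_\xi\sigma,\sigma\rangle=\tfrac12\xi|\sigma|^2$ integrates to zero because $\xi$ preserves $d\mu$, and I would use exactly this to kill such terms.
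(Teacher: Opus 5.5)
Your proposal is correct and takes essentially the paper's route. The paper combines the cutoff-justified identity $\int|\nabla v_\varepsilon|^2=-\int\operatorname{Ric}_{g_\varepsilon}(v_\varepsilon,v_\varepsilon)$ (Ricci uniformly bounded by Lemma~\ref{lem:ricci-components}, which also has a mixed term $\frac{\varepsilon}{2}(d_h^*F)(X)$ that you omit, harmlessly) with the comparison $\nabla^{\varepsilon,\mathrm{rescaled}}=\nabla^{\mathrm{split}}+O(\varepsilon)$ and the triangle inequality in (3.6)--(3.7); the correction along $\varepsilon^{-1}\xi$ is in fact $O(\varepsilon)$ rather than $O(1)$, and because the paper uses pointwise triangle inequalities instead of expanding squares, the cross-term issue you single out as the main obstacle never arises.
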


This proposition is proved in Section~\ref{sec:analytic-proofs}.

The second estimate in~\textup{(2.12)} expresses the basic effect of fiber
collapse: after passage to a limit, only the fiberwise zero mode can remain.
Because the sign lines and zero sets may vary, ordinary vector-bundle
compactness cannot be applied directly.  The required compactness statement
is the following.

\begin{proposition}
\label{prop:adiabatic-limit}
Let $\varepsilon_j\to0$ and let $v_j=v_{\varepsilon_j}$ be normalized
$\Ztwo$-harmonic one-forms.  Write $\sigma_j=\sigma_{\varepsilon_j}$,
with $u_j=u_{\varepsilon_j}$ and $\beta_j=\beta_{\varepsilon_j}$.
After passing to a subsequence, $\sigma_j$ converges
strongly in $L^2(d\mu)$, in the sign quotient $E/\{\pm1\}$, to a nonzero
$S^1$-invariant limit $\sigma_\infty$ of norm one.

More explicitly, choose a regular Seifert neighborhood
$D_0\times S^1$, where $D_0$ is a disk in the base.  Near an exceptional
fiber, instead use the smooth finite cyclic cover $D_0\times S^1$ of
the Seifert neighborhood, with $D_0$ the base uniformizing disk, and
pull back all tensors to this cover, retaining their notation.
In either case write $\eta=dt+A$, with
$t\in\mathbb R/(2\pi\mathbb Z)$ and $A$ a smooth one-form on $D_0$, and set
\[
 F_j:D_0\times\mathbb R
 \longrightarrow D_0\times\bigl(\mathbb R/(2\pi\mathbb Z)\bigr),
 \qquad F_j(x,s)=\bigl(x,[s/\varepsilon_j]_{2\pi}\bigr).
\]
Then $F_j^*g_{\varepsilon_j}=h+(ds+\varepsilon_jA)^2$ converges smoothly
on compact subsets to $h+ds^2$, and
\[
 \sqrt{\varepsilon_j}\,F_j^*v_j\longrightarrow w_\infty
 \quad\hbox{strongly in }L^2_{\mathrm{loc}}(D_0\times\mathbb R;h+ds^2)
\]
in the sign quotient of the cotangent bundle.  The scalar norms satisfy
\[
 \bigl|\sqrt{\varepsilon_j}\,F_j^*v_j\bigr|_{F_j^*g_{\varepsilon_j}}
 \longrightarrow |w_\infty|_{h+ds^2}
 \quad\hbox{uniformly on every compact subset of }D_0\times\mathbb R.
\]
Here $w_\infty$ is the translation-invariant $\Ztwo$-harmonic one-form
for $h+ds^2$ obtained by replacing $e^0$ by $ds$ in $\sigma_\infty$.
On its nonzero locus, local branches have the form
\[
  w_\infty=u_0\,ds+\beta_0,
  \qquad
  d_Bu_0=0,
  \qquad
  d_B\beta_0=d_B^*\beta_0=0.
  \tag{2.13}
\]
The zero set of $\sigma_\infty$ in $Y$ is the inverse image of a finite
subset $\Sigma_\infty\subset B$, and its flat sign line descends to
$B\setminus\Sigma_\infty$.  If $u_0\equiv0$, then
$(\beta_0^{1,0})^{\otimes2}$ extends across $\Sigma_\infty$ to a global
orbifold holomorphic quadratic differential on $B$.
\end{proposition}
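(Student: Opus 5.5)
The plan is to prove Proposition~\ref{prop:adiabatic-limit} in three stages: a compactness step in the sign quotient, identification of the limit equations, and analysis of the limiting zero set together with the quadratic differential.

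\textbf{Compactness.} By Proposition~\ref{prop:uniform-adiabatic-estimate}, the family $\sigma_j$ is bounded in $W^{1,2}$ for the fixed connection $\nabla^{\mathrm{split}}$, with $\|\nabla^{\mathrm{split}}_\xi\sigma_j\|_{L^2}=O(\varepsilon_j)$. Because the sign lines $I_j$ vary, I will first pass to the scalar quantities $|\sigma_j|$ and the symmetric square $\sigma_j\otimes\sigma_j\in\mathrm{Sym}^2E$. Both are sign-independent. Kato's inequality gives $|d|\sigma_j||\le|\nabla^{\mathrm{split}}\sigma_j|$, and $\nabla(\sigma_j\otimes\sigma_j)=2\sigma_j\odot\nabla\sigma_j$ is bounded in $L^1$, with $W^{1,2}$-type control away from the zero set. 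Rellich compactness then gives strong $L^2$ convergence of $|\sigma_j|$ and $L^1$ convergence of $\sigma_j\otimes\sigma_j$ along a subsequence. Since $|\sigma\otimes\sigma-\tau\otimes\tau|$ controls $\min_\pm|\sigma\mp\tau|^2$ up to constants, this yields strong $L^2$ convergence in $E/\{\pm1\}$ with $\|\sigma_\infty\|=1$, so the limit is nonzero. The vertical estimate passes to the limit and shows that $\sigma_\infty$ is $S^1$-invariant.

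\textbf{Local rescaled picture and the limit equations.} In each Seifert chart (or its cyclic cover near an exceptional fiber), the pullback $\sqrt{\varepsilon_j}F_j^*v_j$ is a $\Ztwo$-harmonic one-form for the metrics $h+(ds+\varepsilon_jA)^2$, and these metrics converge smoothly to $h+ds^2$. Here I will invoke the standard interior regularity for $\Ztwo$-harmonic one-forms: the frequency function and the Taubes/Zhang uniform H\"older estimate for $|v|$, with constants depending only on bounded geometry and the local $L^2$ norm. Since $L^2$ norms on unit balls of the unwrapped cover are controlled by the normalization (a ball of size $O(1)$ in $s$ covers $O(1/\varepsilon_j)$ fibers, which the factor $\sqrt{\varepsilon_j}$ compensates), this gives uniform $C^{0,\gamma}$ bounds on $|\sqrt{\varepsilon_j}F_j^*v_j|$. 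Arzel\`a--Ascoli then gives the uniform convergence of norms. Away from the limiting zero set, $|w_\infty|>0$, so nearby the sign lines eventually agree and elliptic estimates give smooth convergence of branches. The limit is therefore $\Ztwo$-harmonic for $h+ds^2$ and is $s$-translation invariant by the vertical estimate. Writing $w_\infty=u_0\,ds+\beta_0$, closedness gives $d_B\beta_0=0$ and $\partial_s\beta_0=d_Bu_0$, which vanishes by invariance. Coclosedness gives $d_B^*\beta_0=0$, since $\partial_su_0=0$. This is \textup{(2.13)}.

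\textbf{Zero set and quadratic differential.} The zero set of the limit is invariant, so it equals $\pi^{-1}(\Sigma_\infty)$. On the two-dimensional base, $u_0$ is a locally constant branch and $\beta_0$ is a $\Ztwo$-harmonic form on $B$, so $\beta_0^{1,0}$ is a holomorphic two-valued section and its square $q$ is holomorphic away from $\Sigma_\infty$. The growth condition \textup{(2.2)}, which survives in the limit through the H\"older bound with a uniform exponent, makes $|q|$ bounded, in fact vanishing, at points of $\Sigma_\infty$. This forces removability, and the same argument on the uniformizing cover handles cone points. Finiteness of $\Sigma_\infty$ follows because the zeros of a nonzero holomorphic $q$ are isolated. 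When $u_0\ne0$ somewhere, $|u_0|$ is a nonzero constant, so there are no zeros at all. The flat line is the square-root line of $q$, or trivial, and so descends to the base.

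\textbf{Main obstacle.} I expect the hardest step to be the uniform H\"older/frequency control of $|v_j|$ independent of the varying zero sets, which is needed both for uniform convergence of norms and for passing the growth condition to the limit. I would first try to adapt the He--Wentworth--Zhang/Taubes monotonicity of the frequency function to the nearly product metrics $F_j^*g_{\varepsilon_j}$, whose geometry is uniformly bounded.
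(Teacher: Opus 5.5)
Your outline is correct. On compactness, the unwrapped charts, the uniform H\"older control and the limit equations, it follows the paper. The uniform estimate you single out as the main obstacle is exactly Lemma~\ref{lem:uniform-local-regularity}, which the paper proves by the frequency-monotonicity argument you propose; it gives $|v|\le C\,d(x,Z)^{1/2}$ and a $C^{0,1/3}$ bound on $|v|$.

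Two steps genuinely differ. For compactness you use the quadratic map $\sigma\mapsto\sigma\otimes\sigma$, which is only bounded in $W^{1,1}$. You then recover $L^2$ convergence in $E/\{\pm1\}$ from $\min_\pm|\sigma\mp\tau|^2\le|\sigma-\tau|\,|\sigma+\tau|\le\sqrt2\,|\sigma\otimes\sigma-\tau\otimes\tau|$. The paper instead uses the degree-one map $\iota([a])=a\otimes a/|a|$, which is bi-Lipschitz for the quotient distance by \textup{(3.9)}, so the $W^{1,2}$ bound and Rellich give strong $L^2$ convergence directly. Both work.

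The substantive difference is how you get finiteness of $\Sigma_\infty$. The paper applies Zhang's one-rectifiability theorem to $w_\infty$ on three-dimensional product charts and obtains $\mathcal H^1_{g_{\mathrm{ref}}}(Z(\sigma_\infty))<\infty$. It then concludes that an invariant set of finite length contains only finitely many fibers, all before and independently of any case analysis on $u_0$. You instead exploit the two-dimensional holomorphic structure. That is more elementary, but two points must be made explicit.

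First, at your removability step $\Sigma_\infty$ is still an arbitrary closed set. Riemann's theorem therefore does not apply, and invoking it would be circular, since you deduce isolatedness afterwards. What works is Rad\'o's theorem. In the case $u_0\equiv0$, extend $q$ by zero; it is continuous because $|q|$ is comparable to $|w_\infty|^2=|\beta_0|^2$, and it is holomorphic off its own zero set. Hence it is holomorphic in every conformal chart and every cone uniformizer. Norm one and the identity theorem then make $\Sigma_\infty$ finite.

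Second, $d_Bu_0=0$ only makes $|u_0|$ constant on each component $U$ of $B\setminus\Sigma_\infty$, not globally. If $|u_0|=c>0$ on $U$, then $|w_\infty|\ge c$ on $\overline U$ by continuity. So $\partial U\cap\Sigma_\infty=\varnothing$, $U$ is open and closed, and $U=B$. This is what rules out mixed configurations, with $u_0$ zero on some components and nonzero on others, and it justifies your dichotomy.

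You also skip the routine transfer of $L^2(d\mu)$ convergence to the unwrapped charts, which is the period count \textup{(3.15)} applied to $d(\sigma_j,\sigma_\infty)^2$. With these additions your argument is complete and dispenses with Zhang's theorem for the limit. The paper's route, in exchange, obtains finiteness without using the holomorphic structure or the case split on $u_0$.
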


This proposition is proved in Section~\ref{sec:analytic-proofs}.

The classification of the limit now contains the decisive use of $r\le3$.
If $u_0\equiv0$, the last assertion of
Proposition~\ref{prop:adiabatic-limit} and
Lemma~\ref{lem:orbifold-quadratic-differentials} place
$(\beta_0^{1,0})^{\otimes2}$ in
\[
  H^0\!\left(\CP^1,
  K_{\CP^1}^{\otimes2}(p_1+\cdots+p_r)\right)
  \cong H^0\!\left(\CP^1,\mathcal O_{\CP^1}(r-4)\right).
  \tag{2.14}
\]
This space is zero when $r\le3$, so $\beta_0=0$, contradicting the norm-one
condition.  Hence $u_0$ is a nonzero parallel section.  It trivializes the
limiting sign line and forces the limiting zero set to be empty.  The
remaining $\beta_0$ is then an ordinary orbifold harmonic one-form on the
orbifold $B$, whose underlying topological space is $S^2$, and therefore
vanishes.  Consequently
\[
  \sigma_\infty=\{\pm c e^0\},
  \qquad c>0.
  \tag{2.15}
\]

Strong $L^2$ convergence alone does not imply that $Z_j=\varnothing$ for all
sufficiently large $j$.  The next proposition uses local uniform convergence
of the norms to exclude zeros, and a Weitzenb\"ock energy estimate to prove
derivative decay.

\begin{proposition}
\label{prop:zero-exclusion-energy}
Under the hypotheses of Proposition~\ref{prop:adiabatic-limit}, suppose that
the limit is given by~\textup{(2.15)}.  Then, for all sufficiently large $j$,
\[
  Z_j=\varnothing.
  \tag{2.16}
\]
Thus $I_j$ is a flat real line bundle on all of $Y$, and
\[
  \bigl\|\,|u_j|-c\,\bigr\|_{L^2(d\mu)}\longrightarrow0,
  \qquad
  \int_Y|\nabla^{\mathrm{split}}\sigma_j|_{g_{\mathrm{ref}}}^2d\mu\longrightarrow0.
  \tag{2.17}
\]
\end{proposition}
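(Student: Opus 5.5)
The plan is to prove the two conclusions in order: first exclude zeros using the uniform convergence of norms from Proposition~\ref{prop:adiabatic-limit}, then obtain derivative decay from a Weitzenb\"ock identity once the sign line is globally defined.

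\textbf{Step 1: zero exclusion.} Cover $Y$ by finitely many Seifert neighborhoods $D_0\times S^1$, using the finite cyclic covers near exceptional fibers, with slightly smaller compact subdisks still covering $B$. In each chart, Proposition~\ref{prop:adiabatic-limit} gives
\[
\bigl|\sqrt{\varepsilon_j}\,F_j^*v_j\bigr|_{F_j^*g_{\varepsilon_j}}\longrightarrow |w_\infty|=c
\]
uniformly on compact subsets of $D_0\times\mathbb R$, since $w_\infty=\pm c\,ds$ by~(2.15). The map $F_j$ restricted to $\overline{D_0'}\times[0,2\pi\varepsilon_j]$ already covers $\overline{D_0'}\times S^1$. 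So it suffices to use uniform convergence on the fixed compact set $\overline{D_0'}\times[0,2\pi]$, which contains $[0,2\pi\varepsilon_j]$ for $\varepsilon_j\le1$. Hence, for large $j$, $|v_j|\ge c/(2\sqrt{\varepsilon_j})>0$ on all of $Y$. Because $|v_j|$ is continuous with zero set exactly $Z_j$, this gives $Z_j=\varnothing$. Then $I_j$ is a flat line bundle on all of $Y$, and $v_j$ is a smooth $I_j$-valued harmonic one-form by elliptic regularity.

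\textbf{Step 2: $L^2$ convergence of $|u_j|$.} Strong $L^2(d\mu)$ convergence in $E/\{\pm1\}$ implies convergence of the pointwise norms: $\bigl||\sigma_j|-|\sigma_\infty|\bigr|\le \operatorname{dist}_{\pm}(\sigma_j,\sigma_\infty)$. Hence $|\sigma_j|\to c$ and $\beta_j\to0$ in $L^2$, where the second uses that the horizontal component of $\sigma_\infty$ vanishes and the sign-quotient distance controls $|\beta_j|$. Since $\bigl||u_j|-c\bigr|\le \bigl||\sigma_j|-c\bigr|+|\beta_j|$, the first half of~(2.17) follows.

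\textbf{Step 3: energy decay, the main obstacle.} With $Z_j$ empty, the Bochner--Weitzenb\"ock formula for the twisted Hodge Laplacian on $(Y,g_{\varepsilon_j})$ integrates without boundary terms:
\[
0=\int_Y\Bigl(|\nabla^{g_\varepsilon}v|^2+\mathrm{Ric}_{g_\varepsilon}(v,v)\Bigr)\,d\operatorname{vol}.
\]
The key point is to compute $\mathrm{Ric}_{g_\varepsilon}(v,v)$ in terms of the split decomposition. The O'Neill formulas give
\[
\mathrm{Ric}(\varepsilon\eta,\varepsilon\eta)=\tfrac{\varepsilon^2}{2}|d\eta|^2,\qquad \mathrm{Ric}^{\rm hor}=K_h-O(\varepsilon^2),\qquad \text{mixed terms}=O(\varepsilon).
\]
After rescaling to $\sigma_j$ and $d\mu$, this yields
\[
\int|\nabla^{\varepsilon,\mathrm{rescaled}}\sigma_j|^2\,d\mu \le C\int|\beta_j|^2\,d\mu+C\varepsilon_j\int|\sigma_j|^2\,d\mu\longrightarrow0 .
\]
It remains to compare $\nabla^{\varepsilon,\mathrm{rescaled}}$ with $\nabla^{\mathrm{split}}$. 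Their difference consists of O'Neill tensors: terms like $\tfrac{\varepsilon}{2}d\eta$ coupling $u$ and $\beta$, plus $\varepsilon^{-1}$ times fiber derivatives, the latter already controlled by~(2.12). The problematic terms are of order $1$ in $\varepsilon$ times $u$ in the horizontal-direction derivative of $\beta$. My first attempt is to write $\nabla^{\mathrm{split}}\sigma=\nabla^{\rm resc}\sigma-T_\varepsilon\sigma$ and check that, in the rescaled frame, $T_\varepsilon$ has size $O(\varepsilon)$ except for the fiber-direction part. If an $O(1)$ cross term $\tfrac12 u\,d\eta$ survives, I will absorb it by noting it appears only in the $\xi$-component, which is weighted by $\varepsilon^{2}$ in $g_{\mathrm{ref}}$ versus $g_\varepsilon$. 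Combining this comparison with~(2.12) and Step~2 then gives $\int|\nabla^{\mathrm{split}}\sigma_j|^2_{g_{\mathrm{ref}}}\,d\mu\to0$. This bookkeeping of curvature and O'Neill terms is where I expect the real difficulty to lie.
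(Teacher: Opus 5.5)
Your outline matches the paper's proof. You exclude zeros using the locally uniform convergence $|\sqrt{\varepsilon_j}\,F_j^*v_j|\to c$ on a fixed compact slab; your direct lower bound on $\overline{D_0'}\times[0,2\pi]$ is just the contrapositive of the paper's argument with points $p_j\in Z_j$. You then use the identity $d(\sigma_j,\{\pm ce^0\})^2=(|u_j|-c)^2+|\beta_j|^2$. Finally, the Weitzenb\"ock identity together with Lemma~\ref{lem:ricci-components} gives $\varepsilon_j\int_Y|\nabla^{g_{\varepsilon_j}}v_j|^2d\mu\le C\|\beta_j\|^2_{L^2(d\mu)}+C\varepsilon_j\to0$, as in (3.26)--(3.28).

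The step you left open is closed by the explicit comparison (3.6), and your ``first attempt'' is the right one. In $g_\varepsilon$-unit directions, $\nabla^{\varepsilon,\mathrm{rescaled}}-\nabla^{\mathrm{split}}$ is a zeroth-order tensor:
along a horizontal $X$ it is $\sigma\mapsto\frac{\varepsilon}{2}(u\,\iota_XF-\langle\beta,\iota_XF\rangle_h e^0)$, and along $\varepsilon^{-1}\xi$ it is $\sigma\mapsto\frac{\varepsilon}{2}\iota_{\beta^{\sharp_h}}F$. So the difference is $O(\varepsilon)|\sigma|$ in every direction, involves no fiber derivatives of $\sigma$, and no $O(1)$ term survives.

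The $O(1)$ term you suspected comes from $\nabla^{g_\varepsilon}_X\eta=\frac12\iota_XF$. However, $u$ multiplies $\Phi_\varepsilon(e^0)=\varepsilon\eta$, not $\eta$, so the term actually enters as $\frac{\varepsilon}{2}u\,\iota_XF$. This term sits in the derivative along a horizontal $X$ and takes values in the horizontal part of $E$. Your fallback therefore would not have handled a genuine $O(1)$ term of this kind: the $\varepsilon^2$ weighting of the $\xi$-direction in $g_{\mathrm{ref}}$ affects only $\nabla_\xi\sigma$, which is in any case already controlled by the second estimate in (2.12).

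With (3.6) one gets pointwise $|\nabla^{\mathrm{split}}\sigma_j|^2_{g_{\mathrm{ref}}}\le2\varepsilon_j|\nabla^{g_{\varepsilon_j}}v_j|^2+C\varepsilon_j^2|\sigma_j|^2$. Integrating this gives (3.29) and completes the argument.
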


This proposition is proved in Section~\ref{sec:analytic-proofs}.

The final input is a lemma.

\begin{lemma}
\label{lem:twisted-spectral-gap}
There is $\lambda_*=\lambda_*(Y,g_{\mathrm{ref}})>0$, where
$g_{\mathrm{ref}}=\pi^*h+\eta^2$, such that, for every nontrivial flat
Euclidean real line bundle $I\to Y$ and every $u\in W^{1,2}(Y;I)$,
\[
  \int_Y|d_Iu|_{g_{\mathrm{ref}}}^2d\mu
  \ge\lambda_*\int_Y|u|^2d\mu.
  \tag{2.18}
\]
\end{lemma}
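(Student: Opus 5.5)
Since $Y$ is a rational homology sphere, $H^1(Y;\Ztwo)\cong\operatorname{Hom}(H_1(Y;\mathbb Z),\Ztwo)$ is a finite group. Flat Euclidean real line bundles on $Y$ are classified up to isomorphism by this group. So there are only finitely many nontrivial flat line bundles $I_1,\ldots,I_N$, and it suffices to prove~\textup{(2.18)} for each one separately and take $\lambda_*$ to be the minimum of the resulting constants. Isomorphic flat bundles give unitarily equivalent quadratic forms, so the constant depends only on the isomorphism class. Also, $d\mu=d\operatorname{vol}_{g_{\mathrm{ref}}}$ because $\eta$ is the unit vertical covector for $g_{\mathrm{ref}}$. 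The inequality is therefore the statement that the bottom of the spectrum of the twisted Laplacian $d_I^*d_I$ on $(Y,g_{\mathrm{ref}})$ is positive.

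For a fixed nontrivial $I$ I would argue by contradiction, or equivalently by the direct method. The quadratic form $u\mapsto\int|d_Iu|^2$ on $W^{1,2}(Y;I)$ is associated with a self-adjoint elliptic operator on a closed manifold, so it has compact resolvent. Its smallest eigenvalue $\lambda_1(I)$ is attained by some $u\in W^{1,2}$, smooth by elliptic regularity. If $\lambda_1(I)=0$, then $d_Iu=0$, so $u$ is a nonzero parallel section of $I$. Its norm is then constant and nonzero, and $u/|u|$ trivializes $I$ as a flat bundle, meaning the holonomy of $I$ is trivial. This contradicts nontriviality, so $\lambda_1(I)>0$. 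Rellich compactness is what guarantees the infimum is attained, so this gives~\textup{(2.18)} with $\lambda_*=\min_k\lambda_1(I_k)$.

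The only point needing care is the finiteness step. Real flat line bundles have holonomy in $O(1)=\{\pm1\}$ and are classified by $\operatorname{Hom}(\pi_1Y,\{\pm1\})=H^1(Y;\Ztwo)$, which is finite because $H_1(Y;\mathbb Z)$ is finite. Without finiteness one would need a uniform argument, for instance a compactness argument over the holonomy variety, but here it is not needed. The dependence of $\lambda_*$ only on $(Y,g_{\mathrm{ref}})$ is then immediate, because $g_{\mathrm{ref}}$ and $\mu$ do not depend on $\varepsilon$.
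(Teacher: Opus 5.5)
Your proposal is correct and follows essentially the same route as the paper. Both arguments use the finiteness of $H^1(Y;\Ztwo)$ to reduce to finitely many nontrivial flat lines, show $\lambda_1(I)>0$ for each, and take the minimum. You additionally justify $\lambda_1(I)>0$ via attainment of the infimum and the fact that a nonzero parallel section would trivialize $I$, which the paper asserts without proof.
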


\begin{proof}
Isomorphism classes of flat Euclidean real lines on $Y$ are identified with
$H^1(Y;\mathbb Z/2)$, which is finite.
For any nontrivial flat line $I$, let $\lambda_1(I)$ be the infimum of
\[
 \frac{\int_Y|d_Iu|_{g_{\mathrm{ref}}}^2d\mu}
      {\int_Y|u|^2d\mu}
\]
over nonzero $u\in W^{1,2}(Y;I)$.  Then $\lambda_1(I)>0$.
Taking the minimum over the finite set of nontrivial holonomy classes gives
$\lambda_*>0$ and proves Lemma~\ref{lem:twisted-spectral-gap}.
If no nontrivial class exists, the assertion is vacuous.
\end{proof}

\begin{proof}[Proof of Theorem~\ref{thm:main}]
If the theorem were false, there would be a sequence
$\varepsilon_j\to0$ carrying nonzero normalized $\Ztwo$-harmonic one-forms.
Propositions~\ref{prop:uniform-adiabatic-estimate} and
\ref{prop:adiabatic-limit}, followed by the calculation~\textup{(2.14)}, show
that every such sequence has the pure vertical limit~\textup{(2.15)}.
Proposition~\ref{prop:zero-exclusion-energy} then gives
$Z_j=\varnothing$ for all large $j$ and
\[
  \|u_j\|_{L^2(d\mu)}\longrightarrow1,
  \qquad
  \|d_{I_j}u_j\|_{L^2(d\mu;g_{\mathrm{ref}})}\longrightarrow0,
  \tag{2.19}
\]
where the first limit follows from the norm-one normalization of $\sigma_\infty$
and the fact that $\displaystyle\int_Yd\mu$ is constant.

The uniform lower bound~\textup{(2.18)}
rules out nontrivial holonomy in~\textup{(2.19)} for all sufficiently large
$j$.  Hence $I_j$ is trivial for all sufficiently large $j$.
For such $j$, the form $v_j$ is an ordinary nonzero harmonic one-form on
$Y$.  This is impossible because $Y$ is a rational homology sphere and hence
$H^1(Y;\mathbb R)=0$.  The contradiction proves
Theorem~\ref{thm:main}.
\end{proof}

\section{Proofs of four propositions}
\label{sec:analytic-proofs}

This section ties up the loose ends.

\subsection{The proof of Proposition~\ref{prop:uniform-adiabatic-estimate}}

Since $\eta$ is $S^1$-invariant and $\eta(\xi)=1$, Cartan's formula gives
\[
 \iota_\xi d\eta=\mathcal L_\xi\eta-d(\eta(\xi))=0.
\]
Thus $d\eta$ is horizontal and $S^1$-invariant, so we can write
\[
  d\eta=\pi^*F,
\]
where $F$ is a fixed orbifold two-form on $B$, represented in each local
uniformizing chart by a smooth two-form invariant under the local group
action.  The vector
$\varepsilon^{-1}\xi$ is the unit vertical vector.

Let $\nabla^\varepsilon=\nabla^{g_\varepsilon}$ denote the Levi--Civita
connection of $g_\varepsilon$, and let $\nabla^h$ denote that of $h$.
For a base one-form $\alpha$, the superscript $\sharp_h$ denotes its
metric-dual vector, characterized by $h(\alpha^{\sharp_h},U)=\alpha(U)$
for every base vector $U$.
\begin{lemma}\label{lem:ricci-components}
Let $\operatorname{Ric}_\varepsilon$ and $\operatorname{Ric}_h$ denote
the Ricci curvature tensors of $g_\varepsilon$ and $h$, respectively.
For invariant horizontal lifts $X,Y$, the following identities hold:
\begin{enumerate}
\item[(i)] The vertical component is
\[
 \operatorname{Ric}_\varepsilon(\varepsilon^{-1}\xi,\varepsilon^{-1}\xi)
 =\frac{\varepsilon^2}{2}|F|_h^2.
\]
\item[(ii)] The horizontal component is
\[
 \operatorname{Ric}_\varepsilon(X,Y)
 =\operatorname{Ric}_h(X,Y)
 -\frac{\varepsilon^2}{2}\langle\iota_XF,\iota_YF\rangle_h.
\]
\item[(iii)] The mixed component is
\[
 \operatorname{Ric}_\varepsilon(\varepsilon^{-1}\xi,X)
 =\frac{\varepsilon}{2}(d_h^*F)(X).
\]
\end{enumerate}
\end{lemma}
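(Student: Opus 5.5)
The plan is to compute with the O'Neill formulas for the Riemannian submersion $\pi:(Y,g_\varepsilon)\to(B,h)$. This is a submersion with totally geodesic fibers: the fibers are orbits of the Killing field $\xi$, and $|\xi|_{g_\varepsilon}=\varepsilon$ is constant. Away from the exceptional fibers the statement is local, so it suffices to work in a regular Seifert chart $D_0\times S^1$ with $\eta=dt+A$ and $dA=F$. Near an exceptional fiber we pass to the smooth finite cyclic cover, where the same formulas hold and all quantities are invariant. Both sides of each identity are smooth tensors, so they then extend across the exceptional fibers by continuity.

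We work in the orthonormal frame $e_0=\varepsilon^{-1}\xi$, $e_1=E_1^H$, $e_2=E_2^H$, where $E_1,E_2$ is a local $h$-orthonormal frame on $B$. The first step records the brackets. Since the lifts are invariant, $[\xi,X^H]=0$. We also have $\eta([X^H,Y^H])=-d\eta(X^H,Y^H)=-F(X,Y)$, which gives
\[
[X^H,Y^H]=[X,Y]^H-F(X,Y)\,\xi .
\]
From Koszul's formula we then read off the O'Neill tensors. The $T$-tensor vanishes because the fibers are geodesics: $\nabla_\xi\xi=0$, since $\xi$ is Killing of constant length. The $A$-tensor is
\[
A_XY=\tfrac12\mathcal V[X^H,Y^H]=-\tfrac12F(X,Y)\,\xi .
\]
Equivalently, $g_\varepsilon(A_XY,e_0)=-\tfrac{\varepsilon}{2}F(X,Y)$. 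This is the only place where $\varepsilon$ enters. It also shows that the relevant tensor is $\varepsilon F$ paired against the unit vertical vector.

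The second step inserts this into O'Neill's Ricci formulas for a submersion with totally geodesic one-dimensional fibers, where the fiber Ricci term vanishes. Each identity then comes from one formula:
\begin{itemize}
\item The vertical formula $\operatorname{Ric}(V,V)=\sum_i|A_{e_i}V|^2$ gives (i), because $\sum_{i,j}\tfrac{\varepsilon^2}{4}F_{ij}^2=\tfrac{\varepsilon^2}{2}|F|_h^2$ with the norm convention $|F|^2=\sum_{i<j}F_{ij}^2$.
\item The horizontal formula $\operatorname{Ric}(X,Y)=\operatorname{Ric}_h(X,Y)-2\sum_i\langle A_Xe_i,A_Ye_i\rangle$ gives (ii), since $2\cdot\tfrac{\varepsilon^2}{4}\sum_iF(X,e_i)F(Y,e_i)=\tfrac{\varepsilon^2}{2}\langle\iota_XF,\iota_YF\rangle_h$.
\item The mixed formula $\operatorname{Ric}(V,X)=g(\sum_i(\nabla_{e_i}A)_{e_i}X,V)$ gives (iii).
\end{itemize}

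The main obstacle is (iii): the sign conventions, and the identification of $\sum_i(\nabla_{e_i}A)_{e_i}$ with the codifferential. The first thing I would try is to avoid the general formula and compute directly. Take $E_i$ with $\nabla^hE_i=0$ at a base point, and use
\[
\operatorname{Ric}(e_0,X)=\sum_i\langle R(e_i,e_0)X,e_i\rangle
\]
with the connection coefficients. These are
\[
\nabla_{e_i}e_j=(\nabla^h_{E_i}E_j)^H-\tfrac{\varepsilon}{2}F_{ij}e_0,\qquad
\nabla_{e_0}e_i=\nabla_{e_i}e_0=\tfrac{\varepsilon}{2}\sum_jF_{ij}e_j .
\]
The $\nabla_{e_0}e_i$ term uses $[e_0,e_i]=0$. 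Differentiating the terms linear in $\varepsilon$ produces $-\tfrac{\varepsilon}{2}\sum_i(\nabla_{E_i}F)(E_i,X)$, which equals $\tfrac{\varepsilon}{2}(d_h^*F)(X)$ under the convention $d^*F=-\sum_i\iota_{E_i}\nabla_{E_i}F$. The terms quadratic in $\varepsilon$ cancel by antisymmetry of $F$.

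As a consistency check, the scalar curvature is
\[
\operatorname{scal}_h-\tfrac{\varepsilon^2}{2}|F|^2\cdot2+\tfrac{\varepsilon^2}{2}|F|^2=\operatorname{scal}_h-\tfrac{\varepsilon^2}{2}|F|^2 ,
\]
which matches the standard Kaluza--Klein formula $\operatorname{scal}_h-\tfrac14\varepsilon^2|F|^2_{\text{tensor}}$. I would use this to pin down the normalizations in (i) and (ii).
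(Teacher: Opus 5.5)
Your proposal is correct. For (iii), your fallback direct computation is essentially the paper's argument: a frame with $\nabla^hE_i=0$ at the point, $\nabla_{e_0}e_i=\nabla_{e_i}e_0=\tfrac{\varepsilon}{2}\sum_jF_{ij}e_j$, and $\nabla_{e_0}e_0=0$.

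For (i) and (ii) you take a different route. You quote O'Neill's Ricci formulas for a Riemannian submersion, so the only geometric inputs are $T=0$ (because $\xi$ is Killing of constant length $\varepsilon$) and $A_XY=-\tfrac12F(X,Y)\xi$. The paper instead works from scratch, using the bracket relation $[X_i,X_j]=[X_i,X_j]^H-F(X_i,X_j)\xi$ and the Koszul-type identity $\nabla^\varepsilon_{X_i}(\varepsilon^{-1}\xi)=\tfrac{\varepsilon}{2}\sum_jF(X_i,X_j)X_j$. In effect it re-derives exactly the special case of O'Neill it needs:
\begin{itemize}
\item its split $-\tfrac{3\varepsilon^2}{4}|\iota_XF|_h^2+\tfrac{\varepsilon^2}{4}|\iota_XF|_h^2$ in (ii) is O'Neill's horizontal correction $-3|A_Xe_i|^2$ summed over $i$, plus the mixed sectional term $|A_Xe_0|^2$;
\item its (i) is $\sum_i|A_{e_i}e_0|^2$.
\end{itemize}

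Your route is shorter and isolates the single input, namely $\varepsilon F$ paired with the unit vertical vector. It also transfers verbatim to other totally geodesic circle fibrations. The paper's route is self-contained and immune to the sign and normalization conventions that vary between references, which is exactly the hazard you flagged for (iii). As a check, the mixed formula exactly as you wrote it, $g\bigl(\sum_i(\nabla_{e_i}A)_{e_i}X,e_0\bigr)$, does evaluate here to $-\tfrac{\varepsilon}{2}\sum_i(\nabla_{E_i}F)(E_i,X)=\tfrac{\varepsilon}{2}(d_h^*F)(X)$. So your two computations of (iii) agree.

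Two small points:
\begin{itemize}
\item The scalar-curvature check tests only one linear combination of the coefficients in (i) and (ii). It is a sanity check, not a way to fix both normalizations.
\item On the cyclic cover, the pulled-back metric is already a connection metric for a free circle action over a smooth disk. The computation therefore holds there pointwise, core included, and no continuity argument is needed.
\end{itemize}
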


\begin{proof}
Work in a local base chart (or a uniformizing chart at a cone point),
and choose an $h$-orthonormal frame $X_1,X_2$ with
$\nabla^hX_i=0$ at the point of calculation.  Base vectors are identified
with their invariant horizontal lifts.  Thus $[\xi,X_i]=0$.
Since $\eta(X_i)=\eta(X_j)=0$ and $d\eta=\pi^*F$,
\[
\begin{aligned}
 F(X_i,X_j)
 &=d\eta(X_i,X_j)\\
 &=X_i(\eta(X_j))-X_j(\eta(X_i))-\eta([X_i,X_j])
 =-\eta([X_i,X_j]).
\end{aligned}
\]
As $\eta(\xi)=1$, the vertical part of the bracket is
$-F(X_i,X_j)\xi$.  Hence
\[
 [X_i,X_j]=[X_i,X_j]^H-F(X_i,X_j)\xi,
\]
where the horizontal term is the lift of the base bracket.

\emph{(i) The vertical component.}  Torsion-freeness and
$[\xi,X_i]=0$ give
\[
 \nabla^\varepsilon_{\varepsilon^{-1}\xi}X_i
 =\nabla^\varepsilon_{X_i}(\varepsilon^{-1}\xi).
\]
We have
\[
\begin{aligned}
 2g_\varepsilon(\nabla^\varepsilon_{X_i}(\varepsilon^{-1}\xi),X_j)
 &=g_\varepsilon(\nabla^\varepsilon_{X_i}(\varepsilon^{-1}\xi),X_j)
 -g_\varepsilon(\nabla^\varepsilon_{X_j}(\varepsilon^{-1}\xi),X_i)\\
 &=-g_\varepsilon(\varepsilon^{-1}\xi,
       \nabla^\varepsilon_{X_i}X_j-\nabla^\varepsilon_{X_j}X_i)\\
 &=-g_\varepsilon(\varepsilon^{-1}\xi,[X_i,X_j])\\
 &=\varepsilon F(X_i,X_j).
\end{aligned}
\]
Also, $\nabla^\varepsilon_{\varepsilon^{-1}\xi}
(\varepsilon^{-1}\xi)=0$: its horizontal component is zero because
$g_\varepsilon(\varepsilon^{-1}\xi,X_i)=0$,
$[\xi,X_i]=0$, and $g_\varepsilon(\varepsilon^{-1}\xi,
\varepsilon^{-1}\xi)=1$, and its vertical component is again zero by
unit length.
To compute the Ricci trace, use the orthonormal frame
$X_1,X_2,\varepsilon^{-1}\xi$.  Then
\[
\begin{aligned}
 &\operatorname{Ric}_\varepsilon(\varepsilon^{-1}\xi,\varepsilon^{-1}\xi)\\
 &\quad=\sum_{i=1}^2g_\varepsilon\bigl(
 \nabla^\varepsilon_{X_i}
       \nabla^\varepsilon_{\varepsilon^{-1}\xi}(\varepsilon^{-1}\xi)
 -\nabla^\varepsilon_{\varepsilon^{-1}\xi}
       \nabla^\varepsilon_{X_i}(\varepsilon^{-1}\xi)
 -\nabla^\varepsilon_{[X_i,\varepsilon^{-1}\xi]}(\varepsilon^{-1}\xi),
 X_i\bigr)\\
 &\quad=-\sum_{i=1}^2g_\varepsilon\bigl(
 \nabla^\varepsilon_{\varepsilon^{-1}\xi}
       \nabla^\varepsilon_{X_i}(\varepsilon^{-1}\xi),X_i\bigr).
\end{aligned}
\]
The first and third terms vanish because
$\nabla^\varepsilon_{\varepsilon^{-1}\xi}(\varepsilon^{-1}\xi)=0$
and $[X_i,\varepsilon^{-1}\xi]=0$.
The preceding connection calculation says
\[
 \nabla^\varepsilon_{X_i}(\varepsilon^{-1}\xi)
 =\frac{\varepsilon}{2}\sum_{j=1}^2 F(X_i,X_j)X_j,
 \qquad
 \nabla^\varepsilon_{\varepsilon^{-1}\xi}X_j
 =\nabla^\varepsilon_{X_j}(\varepsilon^{-1}\xi).
\]
Since $(\varepsilon^{-1}\xi)(F(X_i,X_j))=0$, differentiating and
taking the inner product with $X_i$ yields
\[
\begin{aligned}
 g_\varepsilon\bigl(
 \nabla^\varepsilon_{\varepsilon^{-1}\xi}
 \nabla^\varepsilon_{X_i}(\varepsilon^{-1}\xi),X_i\bigr)
 &=\frac{\varepsilon}{2}\sum_{j=1}^2F(X_i,X_j)
 g_\varepsilon(\nabla^\varepsilon_{\varepsilon^{-1}\xi}X_j,X_i)\\
 &=\frac{\varepsilon^2}{4}\sum_{j=1}^2F(X_i,X_j)F(X_j,X_i).
\end{aligned}
\]
Substituting into the trace and using $F(X_j,X_i)=-F(X_i,X_j)$ gives
\[
\begin{aligned}
 \operatorname{Ric}_\varepsilon(\varepsilon^{-1}\xi,\varepsilon^{-1}\xi)
 &=-\frac{\varepsilon^2}{4}
   \sum_{i,j=1}^2F(X_i,X_j)F(X_j,X_i)\\
 &=\frac{\varepsilon^2}{4}\sum_{i,j=1}^2F(X_i,X_j)^2
 =\frac{\varepsilon^2}{2}|F|_h^2.
\end{aligned}
\tag{3.1}
\]
Here the two-form norm satisfies
$|F|_h^2=\frac12\sum_{i,j=1}^2F(X_i,X_j)^2$.

\emph{(ii) The horizontal component.} Let $X$ be a horizontal vector, extended from the base with
$\nabla^hX=0$ at the point of calculation.  The horizontal component of
$\nabla^\varepsilon_{X_i}X$ is $(\nabla^h_{X_i}X)^H$, and metric
compatibility with the vertical derivative computed above gives
\[
 \nabla^\varepsilon_{X_i}X
 =(\nabla^h_{X_i}X)^H-\frac12F(X_i,X)\xi.
\]
To compute the horizontal curvature trace, consider its three terms
separately.  Since $F(X,X)=0$, the first term is
\[
 g_\varepsilon(\nabla^\varepsilon_{X_i}\nabla^\varepsilon_X X,X_i)
 =h(\nabla^h_{X_i}\nabla^h_X X,X_i).
\]
For the second term, differentiate the preceding decomposition.
The derivative of the scalar coefficient multiplying $\xi$ has zero
inner product with $X_i$, so
\[
\begin{aligned}
 g_\varepsilon(\nabla^\varepsilon_X\nabla^\varepsilon_{X_i}X,X_i)
 &=h(\nabla^h_X\nabla^h_{X_i}X,X_i)
 -\frac12F(X_i,X)g_\varepsilon(\nabla^\varepsilon_X\xi,X_i)\\
 &=h(\nabla^h_X\nabla^h_{X_i}X,X_i)
 -\frac{\varepsilon^2}{4}F(X_i,X)F(X,X_i)\\
 &=h(\nabla^h_X\nabla^h_{X_i}X,X_i)
 +\frac{\varepsilon^2}{4}F(X_i,X)^2.
\end{aligned}
\]
Here we used
$g_\varepsilon(\nabla^\varepsilon_X\xi,X_i)
=\frac{\varepsilon^2}{2}F(X,X_i)$.
For the third term, the bracket decomposition gives
\[
\begin{aligned}
 g_\varepsilon(\nabla^\varepsilon_{[X_i,X]}X,X_i)
 &=h(\nabla^h_{[X_i,X]}X,X_i)
 -F(X_i,X)g_\varepsilon(\nabla^\varepsilon_\xi X,X_i)\\
 &=h(\nabla^h_{[X_i,X]}X,X_i)
 -\frac{\varepsilon^2}{2}F(X_i,X)F(X,X_i)\\
 &=h(\nabla^h_{[X_i,X]}X,X_i)
 +\frac{\varepsilon^2}{2}F(X_i,X)^2,
\end{aligned}
\]
where $\nabla^\varepsilon_\xi X=\nabla^\varepsilon_X\xi$.
The second and third terms are subtracted in the curvature expression,
so their positive corrections enter with negative signs:
\[
\begin{aligned}
 &g_\varepsilon\bigl(
 \nabla^\varepsilon_{X_i}\nabla^\varepsilon_X X
 -\nabla^\varepsilon_X\nabla^\varepsilon_{X_i}X
 -\nabla^\varepsilon_{[X_i,X]}X,X_i\bigr)\\
 &\quad =
 h\bigl(\nabla^h_{X_i}\nabla^h_X X
 -\nabla^h_X\nabla^h_{X_i}X
 -\nabla^h_{[X_i,X]}X,X_i\bigr)
 -\frac{\varepsilon^2}{4}F(X_i,X)^2
 -\frac{\varepsilon^2}{2}F(X_i,X)^2.
\end{aligned}
\]
The remaining, vertical direction in the Ricci trace contributes
$\frac{\varepsilon^2}{4}|\iota_XF|_h^2$, by the preceding vertical
curvature calculation with $X$ in place of $X_i$.
Consequently,
\[
 \operatorname{Ric}_\varepsilon(X,X)
 =\operatorname{Ric}_h(X,X)
 -\frac{3\varepsilon^2}{4}|\iota_XF|_h^2
 +\frac{\varepsilon^2}{4}|\iota_XF|_h^2.
\]
Polarization gives
\[
 \operatorname{Ric}_\varepsilon(X,Y)
 =\operatorname{Ric}_h(X,Y)
 -\frac{\varepsilon^2}{2}
 \langle\iota_XF,\iota_YF\rangle_h.
\]

\emph{(iii) The mixed component.} Only horizontal directions contribute
to the trace.  At the chosen point,
$\nabla^\varepsilon_{\varepsilon^{-1}\xi}X
=\frac{\varepsilon}{2}\sum_jF(X,X_j)X_j$.
The horizontal part of
$\nabla^\varepsilon_{\varepsilon^{-1}\xi}
\nabla^\varepsilon_{X_i}X$ vanishes there, since
$\nabla^h_{X_i}X=0$, all base coefficients are fiber-invariant, and
$\nabla^\varepsilon_{\varepsilon^{-1}\xi}\xi=0$.
Thus
\[
\begin{aligned}
 \operatorname{Ric}_\varepsilon(\varepsilon^{-1}\xi,X)
 &=\sum_{i=1}^2g_\varepsilon\bigl(
 \nabla^\varepsilon_{X_i}\nabla^\varepsilon_{\varepsilon^{-1}\xi}X
 -\nabla^\varepsilon_{\varepsilon^{-1}\xi}
       \nabla^\varepsilon_{X_i}X,X_i\bigr)\\
 &=\frac{\varepsilon}{2}
   \sum_{i=1}^2(\nabla^h_{X_i}F)(X,X_i)
 =\frac{\varepsilon}{2}(d_h^*F)(X),
\end{aligned}
\tag{3.2}
\]
where $(d_h^*F)(X)=-\sum_i(\nabla^h_{X_i}F)(X_i,X)$.
\end{proof}

\begin{proof}[Proof of Proposition~\ref{prop:uniform-adiabatic-estimate}]
By Lemma~\ref{lem:ricci-components}, the Ricci components are uniformly
bounded for $0<\varepsilon\le1$.  In particular, we have
\[
  \operatorname{Ric}_{g_\varepsilon}\ge-C_Rg_\varepsilon
  \qquad(0<\varepsilon\le1).
  \tag{3.3}
\]

The logarithmic amplitude cutoffs used for $\Ztwo$-harmonic forms allow the
Weitzenb\"ock formula to be integrated across $Z_\varepsilon$; see
\cite[Proposition~3.4]{HWZ}.  Since $I_\varepsilon$ is flat, this gives
\[
 \int_{Y\setminus Z_\varepsilon}|\nabla v_\varepsilon|^2
 d\operatorname{vol}_{g_\varepsilon}
 =-\int_{Y\setminus Z_\varepsilon}
 \operatorname{Ric}_{g_\varepsilon}(v_\varepsilon,v_\varepsilon)
 d\operatorname{vol}_{g_\varepsilon}\le C_R.
 \tag{3.4}
\]
Here the cutoff error is sent to zero before applying~\textup{(3.3)}, so no
constant associated with an individual zero set remains.  The normalization and
$d\operatorname{vol}_{g_\varepsilon}=\varepsilon d\mu$ imply
\[
 \int_Y|\sigma_\varepsilon|^2d\mu=1,
 \qquad
 \varepsilon\int_Y|\nabla^{g_\varepsilon}v_\varepsilon|^2d\mu\le C_R.
 \tag{3.5}
\]

The convergence of the rescaled connection to the split connection is a
standard fact in adiabatic geometry; see, for example,
\cite[Section~4, equation~(4.4)]{DaiWei}.  In our circle-fibered setting,
dualizing the covariant derivatives computed in the proof of
Lemma~\ref{lem:ricci-components}, for $\sigma=ue^0+\beta$ one obtains
\[
\begin{aligned}
 \left(\nabla^{\varepsilon,\mathrm{rescaled}}_X-\nabla^{\mathrm{split}}_X\right)\sigma
 &=\frac\varepsilon2\left(
 u\,\iota_XF-\langle\beta,\iota_XF\rangle e^0\right),\\
 \left(\nabla^{\varepsilon,\mathrm{rescaled}}_{\varepsilon^{-1}\xi}
 -\nabla^{\mathrm{split}}_{\varepsilon^{-1}\xi}\right)\sigma
 &=\frac\varepsilon2\iota_{\beta^{\sharp_h}}F.
\end{aligned}
 \tag{3.6}
\]
In particular,
\[
 \nabla^{\varepsilon,\mathrm{rescaled}}
 =\nabla^{\mathrm{split}}+O(\varepsilon),
\]
where the difference is measured using the fixed metric on $E$ and
$g_{\mathrm{ref}}$ on $Y$.
Tensoring with $I_\varepsilon$ does not alter these formulas, since the
connection on $I_\varepsilon$ contributes equally to both connections and
cancels in their difference.  The identity
$\Phi_\varepsilon\sigma_\varepsilon=\sqrt{\varepsilon}\,v_\varepsilon$
therefore gives, for a horizontal unit vector $X$,
\[
\begin{aligned}
 \nabla^{\mathrm{split}}_X\sigma_\varepsilon
 &=\sqrt{\varepsilon}\,\Phi_\varepsilon^{-1}
   \nabla^{g_\varepsilon}_{I_\varepsilon,X}v_\varepsilon\\
 &\quad-\frac{\varepsilon}{2}
 \left(u_\varepsilon\iota_XF
 -\langle\beta_\varepsilon,\iota_XF\rangle_h e^0\right).
\end{aligned}
\]
Since $\Phi_\varepsilon$ is a fiberwise isometry, the first term has norm
$\sqrt{\varepsilon}\,
|\nabla^{g_\varepsilon}_{I_\varepsilon,X}v_\varepsilon|$.
The horizontal and vertical summands in the second term are orthogonal,
and the boundedness of $F$ gives
\[
\begin{aligned}
 \left|u_\varepsilon\iota_XF
 -\langle\beta_\varepsilon,\iota_XF\rangle_h e^0\right|^2
 &=u_\varepsilon^2|\iota_XF|_h^2
   +|\langle\beta_\varepsilon,\iota_XF\rangle_h|^2\\
 &\le C^2\left(u_\varepsilon^2+|\beta_\varepsilon|_h^2\right)
 =C^2|\sigma_\varepsilon|^2.
\end{aligned}
\]
The triangle inequality now gives the horizontal estimate below.
Similarly, the second identity in~\textup{(3.6)} and
$|\iota_{\beta_\varepsilon^{\sharp_h}}F|_h
\le C|\beta_\varepsilon|_h\le C|\sigma_\varepsilon|$ give
\[
 |\nabla^{\mathrm{split}}_{\varepsilon^{-1}\xi}\sigma_\varepsilon|
 \le\sqrt{\varepsilon}\,
 |\nabla^{g_\varepsilon}_{I_\varepsilon,\varepsilon^{-1}\xi}v_\varepsilon|
 +C\varepsilon|\sigma_\varepsilon|.
\]
Multiplying by $\varepsilon$ and using
$\nabla^{\mathrm{split}}_\xi
=\varepsilon\nabla^{\mathrm{split}}_{\varepsilon^{-1}\xi}$
gives the vertical estimate.  Thus
\[
\begin{aligned}
 |\nabla^{\mathrm{split}}_X\sigma_\varepsilon|
 &\le \sqrt{\varepsilon}\,|\nabla^{g_\varepsilon}_{I_\varepsilon,X}
          v_\varepsilon|+C\varepsilon|\sigma_\varepsilon|,\\
 |\nabla^{\mathrm{split}}_\xi\sigma_\varepsilon|
 &\le \varepsilon^{3/2}
        |\nabla^{g_\varepsilon}_{I_\varepsilon,\varepsilon^{-1}\xi}
          v_\varepsilon|+C\varepsilon^2|\sigma_\varepsilon|.
\end{aligned}
 \tag{3.7}
\]
The combination of~\textup{(3.7)} and~\textup{(3.5)} proves both estimates
in~\textup{(2.12)}.  The calculation
lifts equivariantly to each of the finitely many cyclic Seifert charts, so a
single constant works on all of $Y$ and is independent of
$(Z_\varepsilon,I_\varepsilon)$.
\end{proof}

\subsection{The proof of Proposition~\ref{prop:adiabatic-limit}}

We need a lemma first.

\begin{lemma}\label{lem:uniform-local-regularity}
Let $g$ belong to a family of smooth Riemannian metrics on a fixed
three-dimensional domain $U$.  Assume that the family is
$C^\infty$-precompact and uniformly equivalent on any compact subset of $U$.
Let $(Z,I,v)$ be a $\Ztwo$-harmonic one-form on $(U,g)$ satisfying
prescribed upper bounds on
$\int_{K'}|v|_g^2\,d\operatorname{vol}_g$
for every relatively compact subdomain $K'\Subset U$.
Then, for every compact $K\Subset U$, there is a constant $C$, depending
only on $K$, the metric family, and these local $L^2$ bounds, but not on
the particular $g$, $Z$, $I$, or $v$, such that
\[
 |v(x)|_{g}\le C\,d_{g}(x,Z)^{1/2}\quad(x\in K),
 \qquad
 \bigl\|\,|v|_{g}\,\bigr\|_{C^{0,1/3}(K)}\le C.
\]
Here $d_g(x,Z)$ denotes the distance from $x$ to $Z$ with respect to $g$,
with $d_g(x,\varnothing)=\infty$.
The H\"older norm may be measured in any fixed smooth background metric.
\end{lemma}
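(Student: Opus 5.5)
The plan is to follow the standard regularity theory for $\Ztwo$-harmonic forms due to Taubes and Zhang, and to track constants so that they depend only on the geometry and the local $L^2$ bounds.

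\emph{Step 1: a local $L^\infty$ bound.} Away from $Z$, the Weitzenb\"ock formula $\nabla^*\nabla v=-\operatorname{Ric}_g(v)$ holds, since $I$ is flat. Because $|v|$ extends continuously and vanishes on $Z$, the function $|v|^2$ is a weak subsolution on all of $U$:
\[
\Delta_g|v|^2\ge -2C_R|v|^2 .
\]
To justify this across $Z$, I would test against nonnegative functions multiplied by the logarithmic amplitude cutoffs of \cite[Proposition~3.4]{HWZ}; the cutoff error vanishes by (2.2). Moser iteration for this inequality uses only Sobolev and ellipticity constants, and these are uniform by $C^\infty$-precompactness and uniform equivalence of the metrics. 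Together with the prescribed $L^2$ bounds on a slightly larger $K'$, it gives $\sup_{K''}|v|\le C$ on an intermediate compact set $K''$.

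\emph{Step 2: the decay $|v|\le Cd(x,Z)^{1/2}$.} I would use the frequency function
\[
N(x,\rho)=\frac{\rho\int_{B_\rho(x)}|\nabla v|^2}{\int_{\partial B_\rho(x)}|v|^2}
\]
centered at points $x\in Z$. The zero-growth condition (2.2) forces the limiting frequency at each zero to be at least $1/2$; this is Taubes' observation that the smallest homogeneity of a $\Ztwo$-harmonic one-form in the flat model is $1/2$, coming from $\operatorname{Re}\sqrt{z\,dz^2}$. Almost-monotonicity of $N$ has an error $e^{C\rho}$ with $C$ depending only on curvature bounds, so the surface averages satisfy
\[
\frac{1}{\rho^{2}}\int_{\partial B_\rho(x)}|v|^2\le C\rho
\]
for $\rho$ below a fixed scale, normalized by the bound from Step 1. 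For a general point $y$ with $d=d(y,Z)$, apply the mean-value inequality of Step 1 on the ball $B_{d}(y)$. This ball misses $Z$ and lies in $B_{2d}(x)$, where $x$ is a nearest point of $Z$, so
\[
|v(y)|^2\lesssim d^{-3}\int_{B_{2d}(x)}|v|^2\lesssim d .
\]
When $Z=\varnothing$, or $d$ is large, Step 1 already suffices.

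\emph{Step 3: the H\"older bound.} Take $y_1,y_2\in K$ with $|y_1-y_2|=s$ and $d_i=d(y_i,Z)$.
\begin{itemize}
\item If $s\ge \min(d_1,d_2)^{3/2}$, or more simply if $s\gtrsim\min(d_1,d_2)$, then Step 2 gives $\bigl||v(y_1)|-|v(y_2)|\bigr|\le C(d_1^{1/2}+d_2^{1/2})$, which is at most $Cs^{1/2}$ up to adjusting constants.
\item Otherwise both points lie in a ball of radius about $d_1/2$ disjoint from $Z$. There $v$ is an honest harmonic one-form with a single-valued branch, and interior elliptic estimates give $|\nabla v|\le Cd_1^{-1}\sup_{B_{d_1}}|v|\le Cd_1^{-1/2}$. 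Hence the difference is at most $Cs\,d_1^{-1/2}$. Interpolating with the first case when $s< d_1$ gives a bound $\le Cs^{1/3}$.
\end{itemize}
In fact the exponent $1/2$ is available, and $1/3$ leaves slack for the cutoff and scale bookkeeping.

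The main obstacle is Step 2: showing that the frequency lower bound $1/2$ holds with constants independent of $Z$ and $I$. I would first try to cite the uniform version in Taubes' and Zhang's estimates. I would then check that their constants depend only on curvature bounds, injectivity radius, and the $L^2$ norm, all of which are controlled here by $C^\infty$-precompactness.
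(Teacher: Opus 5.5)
Your outline follows the paper's proof: Moser iteration on the weak subsolution inequality for $|v|^2$; the frequency $n(r)=rE(r)/H(r)$ centered at points of $Z$, with Taubes's bound $n(0)\ge\frac12$; almost-monotonicity giving $H(r)\le Cr^3$; the mean-value inequality on $B_d(y)\subset B_{2d}(x)$; and interior gradient estimates off $Z$.

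The step you leave as ``cite and check'' is exactly the part the paper writes out, because Taubes's statements concern a single form and metric. The bound $n(0)\ge\frac12$ is qualitative and carries no constant. The uniformity therefore lives entirely in the monotonicity error and in bounding $H(R)$ by the Step~1 supremum bound.

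The paper obtains $n'(r)\ge -Cr(1+n(r))$ from three ingredients:
\begin{itemize}
\item $\int_{B_r}|v|^2\le CrH(r)$, obtained by testing the subsolution inequality against $(r^2-\rho^2)/6$;
\item $H'=\frac2rH+2E+O(rH)$;
\item $E'\ge 2\int_{\partial B_r}|\nabla_{\partial_r}v|^2+(\frac1r-Cr)E-CH$, obtained from the radial domain variation $X=\chi(\rho)\rho\,\partial_\rho$.
\end{itemize}
The third ingredient needs the integrated Weitzenb\"ock identity for the transported forms across the moved zero set, which the paper justifies with logarithmic amplitude cutoffs. All resulting constants involve only curvature and its first derivative. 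So your claim about the constants is right, but this computation is what you would actually have to supply.

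Your H\"older step differs from the paper's and is sharper. The paper uses only the supremum bound in the gradient estimate, with the threshold $d(x,Z)\le 2\rho^{2/3}$, and so gets exponent $1/3$. You instead feed the decay bound into the interior estimate, $|\nabla v|\le Cd_1^{-1}\sup_{B_{d_1}(y_1)}|v|\le Cd_1^{-1/2}$. Assume $d_1\le d_2$ and split linearly into $s\ge d_1/2$ (then $d_2\le 3s$) versus $s<d_1/2$. Both cases give $Cs^{1/2}$, so your remark that exponent $1/2$ is available is correct. Your first-listed condition $s\ge\min(d_1,d_2)^{3/2}$, however, only gives $d_1\le s^{2/3}$ and hence $s^{1/3}$, not $s^{1/2}$; use the linear split.
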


\begin{remark}
These estimates are a minor adaptation of Taubes's local regularity
argument~\cite[Theorem~1.5, Sections~2--3, and Lemma~6.4]{Taubes}.
His statements do not explicitly address uniformity for the varying
metrics and zero sets considered here, so we give the argument with this
dependence made explicit.
\end{remark}

\begin{proof}
Choose $K'\Subset U$ containing $K$ in its interior and a common radius
$R>0$ so that the balls used below lie in $K'$ and in normal coordinate
charts for every metric in the family.  All geometric constants below
are uniform over this family and in the center of the ball; constants
in bounds for $v$ may also depend on the prescribed $L^2$ bound on $K'$.

Fix $g$ and $(Z,I,v)$, and use $g$ throughout the calculation.  With $\Delta=\operatorname{div}\nabla$, the scalar
Weitzenb\"ock identity gives
\[
 \frac12\Delta|v|^2
 =|\nabla v|^2+\operatorname{Ric}(v,v)\ge -C|v|^2.
\]
This holds weakly across the zero set.  Indeed, cutoffs that vanish where
$|v|\le\delta$ and equal one where $|v|\ge2\delta$ introduce errors bounded
by $C\int_{\{0<|v|<2\delta\}}|\nabla v|^2$, which tend to zero.

To obtain the local supremum estimate, choose a cutoff $0\le\chi\le1$
supported in $B_t(x)$, equal to one on $B_s(x)$, with
$0<s<t\le R$ and $|\nabla\chi|\le2/(t-s)$.
The Weitzenb\"ock identity and $|d|v||\le|\nabla v|$ give
\[
 \Delta\sqrt{|v|^2+\delta^2}\ge-C\sqrt{|v|^2+\delta^2}.
\]
For $m\ge2$, multiply this inequality by
$\chi^2(|v|^2+\delta^2)^{(m-1)/2}$ and integrate by parts.
Absorbing the term containing $\nabla\chi$ by Young's inequality and
letting $\delta\downarrow0$ gives
\[
 \int_{B_t(x)}\bigl|\nabla(\chi|v|^{m/2})\bigr|^2
 \le Cm^2\int_{B_t(x)}(|\nabla\chi|^2+1)|v|^m.
\]
The three-dimensional Sobolev inequality then yields
\[
 \left(\int_{B_s(x)}|v|^{3m}\right)^{1/3}
 \le Cm^2\bigl((t-s)^{-2}+1\bigr)\int_{B_t(x)}|v|^m.
\]
Moser iteration with exponents $m=2,6,18,\ldots$ on concentric balls whose
radii decrease from $t$ to $t/2$ gives
\[
 |v(x)|^2\le Ct^{-3}\int_{B_t(x)}|v|^2.
\]
In particular, the local $L^2$ bound gives a uniform supremum bound on
a fixed neighborhood of $K$.

The conclusion of Lemma~\ref{lem:uniform-local-regularity} is immediate on
components where $v$ vanishes identically.  Otherwise, fix $p\in Z$ near $K$ and set
\[
 H(r)=\int_{\partial B_r(p)}|v|^2,\qquad
 E(r)=\int_{B_r(p)}|\nabla v|^2,\qquad
 n(r)=\frac{rE(r)}{H(r)}.
\]
To compare volume and boundary integrals, test
$\frac12\Delta|v|^2\ge-C|v|^2$ against $(r^2-d(p,\cdot)^2)/6$.
This function is zero on the boundary,
has outward normal derivative $-r/3$, and has Laplacian $-1+O(r^2)$.
Integration by parts therefore gives
\[
 (1-Cr^2)\int_{B_r(p)}|v|^2\le\frac r3H(r).
\]
After uniformly decreasing $R$, this implies
$\int_{B_r(p)}|v|^2\le CrH(r)$.
Since the zero set of a nonzero form has empty interior
by~\cite[Lemma~3.1 and~(3.1)]{Taubes}, this bound also implies
$H(r)>0$ for $0<r\le R$.
Integrating the Weitzenb\"ock identity
\[
 \frac12\Delta|v|^2=|\nabla v|^2+\operatorname{Ric}(v,v)
\]
over $B_r(p)$ gives, for almost every $r$,
\[
 \int_{\partial B_r(p)}\langle v,\nabla_{\partial r}v\rangle
 =E(r)+\int_{B_r(p)}\operatorname{Ric}(v,v)
 =E(r)+O(rH(r)).
\]
In geodesic polar coordinates, identify $\partial B_r(p)$ with the unit
sphere in $T_pU$ by $\theta\mapsto\exp_p(r\theta)$.  Its area element
satisfies
\[
 dS_g=r^2(1+O(r^2))\,d\theta,
 \qquad
 \partial_r dS_g=\left(\frac2r+O(r)\right)dS_g.
\]
Differentiating $H(r)=\int_{\partial B_r(p)}|v|^2$ therefore gives,
for almost every $r$,
\[
\begin{aligned}
 H'(r)
 &=2\int_{\partial B_r(p)}\langle v,\nabla_{\partial r}v\rangle
       +\frac2rH(r)+O(rH(r))\\
 &=\frac2rH(r)+2E(r)+O(rH(r)).
\end{aligned}
\]

Next, we estimate the derivative of $E(r)$.  Let $X$ be a smooth vector
field compactly supported in $K'$, with flow $\phi_t$.  On
$K'\setminus\phi_t^{-1}(Z)$, give $I_t=\phi_t^*I$ its pulled-back flat
connection and define $v_t(x)$ by parallel transporting the covector
factor of $v(\phi_t(x))$ back to $x$ along the flow line.  Then
$|v_t|=|v|\circ\phi_t$, and $v_t,\nabla v_t$ are in $L^2(K')$.
After pulling the operators back to the fixed domain, their difference
from the original operators has first- and zeroth-order coefficients
of size $O(t)$.  Since $d_Iv=d_I^*v=0$, this gives
\[
 \|d_{I_t}v_t\|_{L^2(K')}+\|d_{I_t}^*v_t\|_{L^2(K')}
 \le C_X|t|\bigl(\|\nabla v\|_{L^2(K')}+\|v\|_{L^2(K')}\bigr).
\]

Choose a fixed $0\le\psi\le1$ in $C_c^\infty(K')$, equal to one near
$\operatorname{supp}X$, to remove boundary terms at $\partial K'$.
Before moving to the next step, we must justify the following integrated
Weitzenb\"ock identity across the moved zero set:
\[
 \int_{K'}\bigl(|d_{I_t}(\psi v_t)|^2
                  +|d_{I_t}^*(\psi v_t)|^2\bigr)
 =\int_{K'}\bigl(|\nabla(\psi v_t)|^2
                  +\operatorname{Ric}(\psi v_t,\psi v_t)\bigr).
\]
For each fixed $t$, cut off the moved zero set using a smooth function
$\chi_\delta$ that is zero where $|v_t|\le\delta^2$ and one where
$|v_t|\ge\delta$, with $0<\delta<1$.
Choosing the transition as a function of $\log|v_t|$, Kato's inequality
gives
\[
 \|v_t\,d\chi_\delta\|_{L^2(K')}
 \le \frac{C}{|\log\delta|}\|\nabla v_t\|_{L^2(K')}
 \longrightarrow0\qquad(\delta\downarrow0).
\]
The terms containing $\chi_\delta-1$ tend to zero by dominated
convergence, and this estimate controls the derivative of the cutoff.
Hence $\psi\chi_\delta v_t\to\psi v_t$ strongly in $W^{1,2}$.
Each approximating form has compact support away from the moved zero
set, so the usual integrated Weitzenb\"ock identity applies to it.
Passing to the limit proves this identity.

We now compute the change in the Hodge energy
\[
 \int_{K'}\bigl(|d_{I_t}(\psi v_t)|^2
                  +|d_{I_t}^*(\psi v_t)|^2\bigr).
\]
The choice $\psi=1$ near $\operatorname{supp}X$ ensures that the form
and connection are unchanged on $\operatorname{supp}d\psi$.
Thus the terms containing $d\psi$ in the Leibniz rule contribute the
same energy for every $t$; their cross terms with $d_{I_t}v_t$ and
$d_{I_t}^*v_t$ vanish because both residuals are zero there.
Consequently, the preceding residual estimate gives
\[
\begin{aligned}
 &\int_{K'}\bigl(|d_{I_t}(\psi v_t)|^2
                    +|d_{I_t}^*(\psi v_t)|^2\bigr)\\
 &\quad-\int_{K'}\bigl(|d_I(\psi v)|^2+|d_I^*(\psi v)|^2\bigr)\\
 &=\int_{K'}\psi^2\bigl(|d_{I_t}v_t|^2+|d_{I_t}^*v_t|^2\bigr)
 =O(t^2).
\end{aligned}
\]
The integrated identity therefore implies that the first variation
vanishes:
\[
 \left.\frac{d}{dt}\right|_{t=0}
 \int_{K'}\bigl(|\nabla(\psi v_t)|^2
       +\operatorname{Ric}(\psi v_t,\psi v_t)\bigr)=0.
\]

We expand this first variation as follows:
\[
\begin{aligned}
 &\left.\frac{d}{dt}\right|_{t=0}
   \int_{K'}\bigl(|\nabla(\psi v_t)|^2
                 +\operatorname{Ric}(\psi v_t,\psi v_t)\bigr)\\
 &=\int_{K'}\Bigl(2\sum_{i=1}^3
       \langle\nabla_{e_i}v,\nabla_{\nabla_{e_i}X}v\rangle
       -(\operatorname{div}X)|\nabla v|^2\\
 &\hspace{55pt}-2\sum_{i=1}^3
       \langle\nabla_{e_i}v,\operatorname{Rm}(X,e_i)v\rangle
       -(\nabla_X\operatorname{Ric})(v,v)\\
 &\hspace{55pt}-(\operatorname{div}X)\operatorname{Ric}(v,v)\Bigr)=0.
\end{aligned}
\]
Here $e_1,e_2,e_3$ form a local orthonormal frame, and
$\operatorname{Rm}(X,Y)=[\nabla_X,\nabla_Y]-\nabla_{[X,Y]}$
is the curvature acting on covectors; the sign line contributes no
curvature because it is flat.

The cutoff $\psi$ contributes no variation, since $\psi=1$ near
$\operatorname{supp}X$ and the flow is stationary elsewhere.
Hence
\[
\begin{aligned}
 &\left|\int_{K'}\left((\operatorname{div}X)|\nabla v|^2
     -2\sum_{i=1}^3
       \langle\nabla_{e_i}v,\nabla_{\nabla_{e_i}X}v\rangle\right)\right|\\
 &\qquad\le C\int_{K'}\left((|\nabla X|+|X|)|v|^2
                        +|X|\,|v|\,|\nabla v|\right).
\end{aligned}
\]
Here $C$ uses only bounds for the curvature and its first covariant
derivative, and is independent of $Z$, $I$, and $v$.

Write $\rho=d_g(p,\cdot)$ and take
$X=\chi(\rho)\rho\partial_\rho$, where $\chi$ is a smooth nonincreasing
radial cutoff equal to one for $\rho\le r$ and zero for $\rho\ge r+\delta$,
with $0<r<r+\delta<R$ and $|\chi'|\le C/\delta$.

Since $\nabla(\rho\partial_\rho)=\operatorname{id}+O(\rho^2)$,
we have
\[
\begin{aligned}
 &\int_{K'}\left((\operatorname{div}X)|\nabla v|^2
     -2\sum_{i=1}^3
       \langle\nabla_{e_i}v,\nabla_{\nabla_{e_i}X}v\rangle\right)\\
 &=\int_{K'}\left(\chi|\nabla v|^2
     +\rho\chi'\bigl(|\nabla v|^2-2|\nabla_{\partial_\rho}v|^2\bigr)\right)\\
 &\qquad+O\!\left(\int_{K'}\rho^2|\chi|\,|\nabla v|^2\right).
\end{aligned}
\]
Let $\delta\downarrow0$, so the transition annulus shrinks to
$\partial B_r(p)$.  Since $|\nabla\rho|=1$ away from $p$, the coarea
formula gives, for almost every $r\in(0,R)$,
\[
\begin{aligned}
 \int_{K'}\chi|\nabla v|^2
 &=\int_0^{r+\delta}\chi(s)
      \left(\int_{\partial B_s(p)}|\nabla v|^2\right)ds
   \longrightarrow E(r),\\
 \int_{K'}\rho\chi'|\nabla v|^2
 &=\int_r^{r+\delta}s\chi'(s)
      \left(\int_{\partial B_s(p)}|\nabla v|^2\right)ds\\
 &\longrightarrow-r\int_{\partial B_r(p)}|\nabla v|^2=-rE'(r),\\
 -2\int_{K'}\rho\chi'|\nabla_{\partial_\rho}v|^2
 &=-2\int_r^{r+\delta}s\chi'(s)
      \left(\int_{\partial B_s(p)}|\nabla_{\partial_s}v|^2\right)ds\\
 &\longrightarrow2r\int_{\partial B_r(p)}|\nabla_{\partial r}v|^2.
\end{aligned}
\]
Here $\int_r^{r+\delta}\chi'(s)\,ds=-1$; the last two limits hold at
Lebesgue points of the corresponding sphere integrals, using
$|\chi'|\le C/\delta$.
The geometric error is bounded in the limit by $Cr^2E(r)$.
Combining these limits with the preceding curvature estimate gives
\[
\begin{aligned}
 rE'(r)-2r\int_{\partial B_r(p)}|\nabla_{\partial r}v|^2
 ={}&E(r)+O(r^2E(r))\\
 &+O\!\left(r\int_{B_r(p)}|v|\,|\nabla v|
             +\int_{B_r(p)}|v|^2+rH(r)\right).
\end{aligned}
\]
Using $|v|\,|\nabla v|\le r|\nabla v|^2+r^{-1}|v|^2$ and the volume
bound yields
\[
 E'(r)\ge2\int_{\partial B_r(p)}|\nabla_{\partial r}v|^2
          +\left(\frac1r-Cr\right)E(r)-CH(r).
\]
The coarea formula and $|v|^2\in W^{1,1}$ on compact annuli give local
absolute continuity of $E$ and $H$ on $(0,R)$.  Differentiating $n$,
substituting the preceding estimates, and applying Cauchy--Schwarz gives
\[
\begin{aligned}
 n'(r)
 &\ge\frac{2r}{H(r)}
       \int_{\partial B_r(p)}|\nabla_{\partial r}v|^2
       -\frac{2n(r)^2}{r}-Cr(1+n(r))\\
 &\ge2r\left(\frac{E(r)+O(rH(r))}{H(r)}\right)^2
       -\frac{2n(r)^2}{r}-Cr(1+n(r))\\
 &\ge-Cr(1+n(r)).
\end{aligned}
\]
The constant is enlarged in the last line.

Viewing $v$ as a degree-one solution of the Hodge--Dirac equation,
the frequency in~\cite[(3.2)--(3.3)]{Taubes} is
$(1+O(r^2))n(r)$ in dimension three; the factor comes from his
exponential boundary normalization.
Thus~\cite[Lemma~6.4]{Taubes} gives $n(0)\ge1/2$ at $p\in Z$ for
each fixed form.  Integration gives
\[
 n(r)+1\ge e^{-Cr^2/2}(n(0)+1),\qquad
 n(r)\ge\frac12-Cr^2.
\]
Consequently,
\[
 \frac{d}{dr}\log\frac{H(r)}{r^2}\ge\frac1r-Cr,
 \qquad
 H(r)\le C H(R)\left(\frac rR\right)^3.
\]
The uniform supremum bound controls $H(R)$ at the common radius $R$.
Hence $\int_{B_r(p)}|v|^2=\int_0^rH(t)\,dt\le Cr^4$.
For $d(p,x)=\rho<R/4$, the local supremum estimate gives
\[
 |v(x)|^2\le C\rho^{-3}\int_{B_\rho(x)}|v|^2
 \le C\rho^{-3}\int_{B_{2\rho}(p)}|v|^2\le C\rho.
\]
Taking the distance to $Z$, and using the supremum bound for larger
distances, proves $|v(x)|\le C d(x,Z)^{1/2}$ on $K$.

Finally, consider $x,y\in K$ with $d(x,y)=\rho\le1$ sufficiently small.
If $d(x,Z)\le2\rho^{2/3}$, then
\[
 d(y,Z)\le\rho+d(x,Z)\le3\rho^{2/3}.
\]
The zero-distance bound therefore gives
$\bigl||v(x)|-|v(y)|\bigr|\le C\rho^{1/3}$; the same applies with
$x,y$ exchanged.  Otherwise, a minimizing geodesic joining them stays
at distance at least $\rho^{2/3}$ from $Z$.  Balls of radius
$\rho^{2/3}/2$ centered along it avoid $Z$.  In a parallel unit frame
of $I$, the form $v$ is an ordinary harmonic one-form on each such ball.
Rescaling the ball to unit radius and applying the interior gradient
estimate gives, at each point $q$ of the geodesic,
\[
 |\nabla v(q)|\le C\rho^{-2/3}
      \sup_{B_{\rho^{2/3}/2}(q)}|v|
 \le C\rho^{-2/3},
\]
where the last inequality uses the uniform supremum bound.  Thus
\[
 \bigl||v(x)|-|v(y)|\bigr|
 \le\int_0^\rho C\rho^{-2/3}\,dt=C\rho^{1/3}.
\]
Larger distances are again covered by the supremum bound.  Uniform
equivalence with a fixed background metric gives the stated H\"older norm.
\end{proof}

\begin{proof}[Proof of Proposition~\ref{prop:adiabatic-limit}]
We first remove the difficulty caused by the varying sign lines.  On each
fiber of $E/\{\pm1\}$ define
\[
 d([a],[b])=\min\{|a-b|,|a+b|\},
 \qquad
 \iota([a])=
 \begin{cases}
   a\otimes a/|a|,&a\ne0,\\
   0,&a=0.
 \end{cases}
 \tag{3.8}
\]
If $a=r\widehat a$, $b=s\widehat b$, and
$c=|\langle\widehat a,\widehat b\rangle|$, then
\[
 d([a],[b])^2=r^2+s^2-2rsc,
 \qquad
 |\iota([a])-\iota([b])|^2=r^2+s^2-2rsc^2.
\]
Consequently
\[
 d([a],[b])
 \le |\iota([a])-\iota([b])|
 \le\sqrt2\,d([a],[b]).
 \tag{3.9}
\]
For a local branch $a=r\widehat a$ one likewise has
\[
 |\nabla^{\mathrm{split}}a|
 \le|\nabla^{\mathrm{split}}\iota([a])|
 \le\sqrt2\,|\nabla^{\mathrm{split}}a|.
 \tag{3.10}
\]
The logarithmic cutoff at $|\sigma_j|=0$ makes~\textup{(3.10)} valid globally in
the weak sense; this is the standard two-valued Sobolev extension argument
of~\cite[Lemmas~2.1 and~2.3]{Zhang}.  Thus
$s_j:=\iota(\sigma_j)$ is uniformly bounded in
$W^{1,2}(Y;\operatorname{Sym}^2E)$ by Proposition~\ref{prop:uniform-adiabatic-estimate}.
Rellich compactness gives, after passing to a subsequence,
\[
 s_j\rightharpoonup s_\infty\quad\hbox{in }W^{1,2},
 \qquad
 s_j\longrightarrow s_\infty\quad\hbox{in }L^2.
 \tag{3.11}
\]
Here $\rightharpoonup$ denotes weak convergence and $\longrightarrow$ denotes strong convergence.
The image of $\iota$ is the closed cone of nonnegative rank-at-most-one
symmetric tensors.  Hence $\sigma_\infty:=\iota^{-1}(s_\infty)$ is defined almost
everywhere, and~\textup{(3.9)} gives
\[
 \int_Yd(\sigma_j,\sigma_\infty)^2d\mu\longrightarrow0,
 \qquad
 \int_Y|\sigma_\infty|^2d\mu=1.
 \tag{3.12}
\]
Moreover,~\textup{(2.12)} and~\textup{(3.10)} give
$\|\nabla^{\mathrm{split}}_\xi s_j\|_{L^2}\le C\varepsilon_j$.
Passing to the weak limit shows $\nabla^{\mathrm{split}}_\xi s_\infty=0$.
Since $\iota$ is injective and equivariant, $\sigma_\infty$ is
$S^1$-invariant.

We next identify its equation.  On a regular Seifert chart write
$\eta=dt+A$, where $t\in\mathbb R/(2\pi\mathbb Z)$, and unwrap the fibers by
\[
 F_j:D_0\times\mathbb R
 \longrightarrow D_0\times\bigl(\mathbb R/(2\pi\mathbb Z)\bigr),
 \qquad F_j(x,s)=\bigl(x,[s/\varepsilon_j]_{2\pi}\bigr).
\]
For each fixed $x$, this is the universal covering of the circle
fiber.
Let $S>0$ be fixed independently of $j$.  On the slab $D_0\times[-S,S]$,
\[
 F_j^*g_{\varepsilon_j}
 =h+(ds+\varepsilon_jA)^2\longrightarrow h+ds^2
 \quad\hbox{in }C^\infty.
 \tag{3.13}
\]
Writing $\sigma_j=u_je^0+\beta_j$, the normalized pullbacks are
\[
 \sqrt{\varepsilon_j}\,F_j^*v_j
 =u_j(x,s/\varepsilon_j)(ds+\varepsilon_jA)
  +\beta_j(x,s/\varepsilon_j).
 \tag{3.14}
\]
For a nonnegative $2\pi$-periodic function $f$, period counting gives
\[
 \varepsilon_j\int_{-S/\varepsilon_j}^{S/\varepsilon_j}f(t)dt
 \le\left(\frac S\pi+2\varepsilon_j\right)
 \int_0^{2\pi}f(t)dt.
 \tag{3.15}
\]
Since $A$ is bounded on compact sets and the pulled-back coefficients
have uniformly bounded local $L^2$ norm by~\textup{(3.15)}, the term
$\varepsilon_j u_j(x,s/\varepsilon_j)A$ in~\textup{(3.14)} tends to zero
in $L^2_{\mathrm{loc}}$.
Thus, using~\textup{(3.12)},~\textup{(3.15)}, and the fiber-invariance of
$\sigma_\infty$, and replacing $e^0$ by $ds$ in $\sigma_\infty$, we obtain
\[
 \sqrt{\varepsilon_j}\,F_j^*v_j\longrightarrow w_\infty
 \quad\hbox{strongly in }L^2_{\mathrm{loc}},
 \tag{3.16}
\]
as quotient-valued one-forms.  On the nonzero locus, a local representative
$\sigma_\infty=u_0e^0+\beta_0$ corresponds to
$w_\infty=u_0\,ds+\beta_0$, with coefficients independent of $s$.

Equations~\textup{(2.12)} and~\textup{(3.15)} also give a uniform
$W^{1,2}(D_0\times[-S,S])$ bound for
$\sqrt{\varepsilon_j}\,F_j^*v_j$, measured with $h+ds^2$.
For nested compact slabs $K\Subset K'$, the metrics
in~\textup{(3.13)} and the preceding local $L^2$ bound satisfy the
hypotheses of Lemma~\ref{lem:uniform-local-regularity}.
Applying it to $\sqrt{\varepsilon_j}F_j^*v_j$ gives
\[
 \bigl\|\,|\sqrt{\varepsilon_j}F_j^*v_j|\,\bigr\|_{C^{0,1/3}(K)}
 \le C_{K,K'},
 \tag{3.17}
\]
and
\[
 |(\sqrt{\varepsilon_j}F_j^*v_j)(x)|
 \le C\,d_{F_j^*g_{\varepsilon_j}}(x,F_j^{-1}(Z_j))^{1/2}.
 \tag{3.18}
\]
Near an exceptional fiber, first pull back the metric and the one-form
to the smooth finite cyclic cover $D_0\times S^1$ of the Seifert chart,
and then unwrap the circle factor as above.  On this cover the metric coefficients
are smooth across the center of $D_0$, so the same estimates apply.
The cyclic group preserves the metric and the norm of the pulled-back
form, and hence the norm estimates descend to the local quotient.
The covering degree is fixed independently of $j$, so this does not
affect uniformity in $j$.

By Arzel\`a--Ascoli and~\textup{(3.16)}, the scalar norms converge locally
uniformly to $|w_\infty|$.  On a simply connected ball disjoint from
$Z_\infty:=\{|w_\infty|=0\}$, the approximating forms are eventually
nonvanishing.  After choosing one sign, ordinary elliptic estimates for
$(d+d^*_{F_j^*g_{\varepsilon_j}})(\sqrt{\varepsilon_j}\,F_j^*v_j)=0$ give smooth convergence to a
local branch of $w_\infty$ satisfying
\[
 dw_\infty=0,
 \qquad d_{h+ds^2}^*w_\infty=0.
 \tag{3.19}
\]
Estimate~\textup{(3.17)} passes to the limit and implies, for
$z\in Z_\infty$,
\[
 \int_{B_\rho(z)}|w_\infty|^2
 d\operatorname{vol}_{h+ds^2}\le C\rho^{3+2/3}.
 \tag{3.20}
\]
The logarithmic cutoff therefore extends~\textup{(3.19)} distributionally
across the zero set.  Thus $w_\infty$ is a genuine $\Ztwo$-harmonic one-form
for the product metric.

We use the following three-dimensional form of Zhang's zero-set
theorem~\cite[Theorem~1.4 and the subsequent three-dimensional consequence]{Zhang};
the one-form formulation is also recorded in
\cite[Definition~3.1 and Theorem~3.3]{HWZ}.
Let $(M,g)$ be a connected smooth Riemannian three-manifold without
boundary, not necessarily compact or complete, and let $v$ be a nonzero
continuous section of $T^*M/\{\pm1\}$ with zero set $Z$.
Suppose that its local branches on $M\setminus Z$ are smooth and satisfy
$dv=d_g^*v=0$, that
\[
 \int_{M\setminus Z}(|v|^2+|\nabla v|^2)\,d\operatorname{vol}_g<\infty,
\]
and that, for some $\delta>0$, every $z\in Z$ has constants
$C_z,\rho_z>0$ such that
\[
 \int_{B_\rho(z)}|v|^2\,d\operatorname{vol}_g
 \le C_z\rho^{3+\delta}\qquad(0<\rho<\rho_z).
\]
Here the balls and norms are taken with respect to $g$.
The conclusion is that $Z$ is one-rectifiable and
$\mathcal H_g^1(Z\cap K)<\infty$ for every compact subset $K\subset M$.

On each connected relatively compact product subchart where the limit
is nonzero, its continuous norm, finite energy, and~\textup{(3.20)}
allow us to apply Taubes's empty-interior result
\cite[Lemma~3.1 and~(3.1)]{Taubes}.
Normalization~\textup{(3.12)} supplies one such nonzero subchart;
overlapping connected subcharts then propagate nontriviality, so
$w_\infty$ cannot vanish identically on any nonempty open subset of a
product chart.

Now restrict $w_\infty$ to a connected open subset
$\Omega\Subset D_0\times\mathbb R$ of a smooth product chart.
The local $W^{1,2}$ bound gives finite $L^2$ norm and Dirichlet energy on
$\Omega$.
Taking $(M,g)=(\Omega,h+ds^2)$ in the stated result shows that
$Z_\infty\cap\Omega$ is one-rectifiable and
\[
 \mathcal H^1_{h+ds^2}(Z_\infty\cap K)<\infty
 \qquad\hbox{for every compact }K\subset\Omega.
\]
On compact subcharts this metric is uniformly equivalent to the lift of
the fixed metric $g_{\mathrm{ref}}=g_1$.  The local Seifert covering maps
and a finite covering of $Y$ consequently give
\[
 \mathcal H^1_{g_{\mathrm{ref}}}\bigl(Z(\sigma_\infty)\bigr)<\infty.
\]
Since $\sigma_\infty$ is $S^1$-invariant, its zero set is a union of circle
fibers.  In $g_{\mathrm{ref}}$, the generator $\xi$ has unit length;
regular fibers have length $2\pi$ and the exceptional fibers have lengths
$2\pi/\alpha_i$.  Finite total length implies that only
finitely many fibers occur.  Therefore
\[
 Z(\sigma_\infty)=\pi^{-1}(\Sigma_\infty)
 \tag{3.21}
\]
for a finite subset $\Sigma_\infty\subset B$.

We first check compatibility with the cyclic action near an exceptional
fiber.  In a cyclic uniformizer of order $\alpha$, let
$\zeta=e^{2\pi i/\alpha}$ and let the generator act by
$(z,t)\mapsto(\zeta z,t+2\pi b/\alpha)$.  Since $t=s/\varepsilon_j$,
its lift to $D_0\times\mathbb R$ is
\[
 \gamma_j(z,s)
   =(\zeta z,s+2\pi b\varepsilon_j/\alpha)
 \longrightarrow
 \gamma_0(z,s)=(\zeta z,s).
 \tag{3.21a}
\]
The pulled-back quotient forms are invariant under $\gamma_j$, since they
come from the Seifert neighborhood.  Strong local convergence then gives
$\gamma_0^*w_\infty=w_\infty$ as quotient-valued one-forms.
Thus, on the nonzero locus, the local branches and their sign line are
compatible with the cyclic action on the uniformizing disk.

On the nonzero locus, transporting a representative of $\sigma_\infty$
by the circle action closes with the same vector after time $2\pi$,
because the time-$2\pi$ action on $E$ is the identity.  Thus the
regular-fiber holonomy of the limiting flat line bundle is $+1$.
The orbifold homotopy exact sequence for the Seifert fibration shows that
its holonomy representation factors through
$\pi_1^{\orb}(B\setminus\Sigma_\infty)$.
Hence the line bundle descends to an orbifold flat line bundle on
$B\setminus\Sigma_\infty$.

Finally, since the coefficients of $w_\infty=u_0ds+\beta_0$ are
independent of $s$,
\[
 d(u_0ds+\beta_0)
 =d_B\beta_0+d_Bu_0\wedge ds,
 \qquad
 d^*(u_0ds+\beta_0)=d_B^*\beta_0,
\]
Equation~\textup{(3.19)} therefore gives $d_Bu_0=0$ and
$d_B\beta_0=d_B^*\beta_0=0$, as asserted in~\textup{(2.13)}.

If $u_0\equiv0$, then on the complement of $\Sigma_\infty$ each local
$(1,0)$-part $\beta_0^{1,0}$ is holomorphic with values in the sign line, so
its square is single-valued and holomorphic.  Estimate~\textup{(3.17)} gives,
in every disk or cone uniformizer,
\[
 |\beta_0(x)|\le C d_B(x,\Sigma_\infty)^{1/3},
 \qquad
 \int_{D_\rho}|\beta_0|^2dA_h\le C\rho^{2+2/3}.
 \tag{3.22}
\]
The resulting quadratic differential has a removable singularity at each
point of $\Sigma_\infty$.  Equivariance on the cone uniformizers shows that
the extension is an orbifold quadratic differential.  This proves every
assertion of Proposition~\ref{prop:adiabatic-limit}.
\end{proof}

\subsection{The proof of Proposition~\ref{prop:zero-exclusion-energy}}

\begin{proof}[Proof of Proposition~\ref{prop:zero-exclusion-energy}]
Suppose first that $Z_j\ne\varnothing$ along an infinite subsequence and
choose $p_j\in Z_j$.  A finite collection of nested Seifert charts
covers $Y$, so, after passing to a subsequence,
all $p_j$ lift to one such chart.  In a regular chart choose a full-fiber deck
translate so that $|s_j|\le\pi\varepsilon_j$; at an exceptional fiber first
lift to the fixed cyclic uniformizer and make the same choice upstairs.  The
lifted points therefore remain in one fixed compact slab.  The local uniform
convergence in Proposition~\ref{prop:adiabatic-limit} and the assumption
$\sigma_\infty=\{\pm ce^0\}$ give
\[
 |(\sqrt{\varepsilon_j}\,F_j^*v_j)|\longrightarrow c>0
 \quad\hbox{uniformly on that slab}.
 \tag{3.23}
\]
This contradicts the fact that the lift of $p_j$ is a zero.  Hence
$Z_j=\varnothing$ for all sufficiently large $j$.

The exact fiberwise identity
\[
 d(\sigma_j,\{\pm ce^0\})^2
 =(|u_j|-c)^2+|\beta_j|^2
 \tag{3.24}
\]
and strong convergence give
\[
 \|\beta_j\|_{L^2(d\mu)}\longrightarrow0,
 \qquad
 \bigl\|\,|u_j|-c\,\bigr\|_{L^2(d\mu)}\longrightarrow0.
 \tag{3.25}
\]

It remains to prove the derivative decay.  Set
\[
 \mathcal E_j
 =\varepsilon_j\int_Y|\nabla^{g_{\varepsilon_j}}_{I_j}v_j|^2d\mu.
\]
Scaling the cutoff Weitzenb\"ock identity~\textup{(3.4)} gives
\[
 \mathcal E_j
 =-\varepsilon_j\int_Y\operatorname{Ric}_{g_{\varepsilon_j}}
   (v_j,v_j)d\mu.
 \tag{3.26}
\]
The metric dual of $\sqrt{\varepsilon_j}\,v_j$ is
$\varepsilon_j^{-1}u_j\xi+\beta_j^{\sharp_h}$.  Using the three Ricci identities
in Lemma~\ref{lem:ricci-components}, we obtain
\[
\begin{aligned}
 \varepsilon_j\operatorname{Ric}_{\varepsilon_j}(v_j,v_j)
 ={}&\operatorname{Ric}_h(\beta_j^{\sharp_h},\beta_j^{\sharp_h})
 -\frac{\varepsilon_j^2}{2}
 |\iota_{\beta_j^{\sharp_h}}F|_h^2\\
 &+\varepsilon_j u_j(d_h^*F)(\beta_j^{\sharp_h})
 +\frac{\varepsilon_j^2}{2}|F|_h^2u_j^2.
\end{aligned}
 \tag{3.27}
\]
Since all base tensors are fixed,~\textup{(3.25)} and
$\|u_j\|_{L^2(d\mu)}\le\|\sigma_j\|_{L^2(d\mu)}=1$ give
\[
 0\le\mathcal E_j
 \le C\|\beta_j\|_{L^2(d\mu)}^2
   +C\varepsilon_j\|u_j\|_{L^2(d\mu)}\|\beta_j\|_{L^2(d\mu)}
   +C\varepsilon_j^2\|u_j\|_{L^2(d\mu)}^2
 \longrightarrow0.
 \tag{3.28}
\]
Keeping the small factors in the connection comparison~\textup{(3.7)} gives
\[
 \int_Y|\nabla^{\mathrm{split}}\sigma_j|_{g_{\mathrm{ref}}}^2d\mu
 \le C\mathcal E_j+C\varepsilon_j^2\longrightarrow0.
 \tag{3.29}
\]
Together with~\textup{(3.25)}, this is precisely~\textup{(2.17)} and proves
the proposition.
\end{proof}

\subsection{The proof of Proposition~\ref{prop:nonorientable-dichotomy}(ii)}
\label{sec:nonorientable-nonexistence}

\begin{proof}[Proof of Proposition~\ref{prop:nonorientable-dichotomy}\textup{(ii)}]
Let the underlying nonorientable surface of the base have genus $k$.
The presentation in~\cite[Theorem~6.1]{JN}, with the integer Seifert
invariant written separately, has generators
$a_1,\ldots,a_k,q_1,\ldots,q_s,h$ and relations
\[
\begin{gathered}
 a_iha_i^{-1}=h^{-1},\qquad [q_j,h]=1,\qquad
 q_j^{\alpha_j}h^{\beta_j}=1,\\
 q_1\cdots q_s a_1^2\cdots a_k^2=h^b.
\end{gathered}
\]
Here $h$ represents a regular fiber, $(\alpha_j,\beta_j)$ are the
exceptional Seifert pairs, and $b$ is the integer Seifert invariant.
Over $\mathbb Q$, abelianization gives $h=q_j=0$ and
$a_1+\cdots+a_k=0$, hence $H_1(Y;\mathbb Q)\cong\mathbb Q^{k-1}$.
Since $Y$ is a rational homology sphere, $k=1$, so the coarse base is
$\mathbb{RP}^2$.

Since $Y$ is oriented, orienting the fibers is equivalent to orienting the
base.  Thus $\widehat Y$ fibers over $S^2$, with each Seifert pair appearing
twice in the lifted fibration~\cite[Proposition~2.1]{Peet}.

Suppose first that $s=0$ and $b\ne0$.  The lifted circle bundle over $S^2$
has Euler number $\pm2b\ne0$.  Hence $H^1(\widehat Y;\mathbb Q)=0$ by the
Gysin sequence.  Thus $\widehat Y$ is a closed oriented Seifert fibered
rational homology $3$-sphere with oriented-base cone count $r=0$.

Now suppose that $s=1$.  Absorbing the residual integer into the genuine
Seifert pair gives $(p,q)$ with $p\ge2$ and $\gcd(p,q)=1$, so $q\ne0$.
The lifted fibration over $S^2(p,p)$ has two copies of this pair and hence
rational Euler number
\[
 e(\widehat Y)=\pm\left(\frac{q}{p}+\frac{q}{p}\right)
 =\pm\frac{2q}{p}\ne0.
\]
Since $H^1_{\mathrm{orb}}(S^2(p,p);\mathbb Q)=0$, the rational orbifold
Gysin sequence gives $H^1(\widehat Y;\mathbb Q)=0$.
Thus $\widehat Y$ is a rational homology sphere with oriented-base cone
count $r=2$.

In either case the fiber-orientation-adapted identity
\[
 p^*g_\varepsilon
 =\widehat\pi^*\widehat h+\varepsilon^2\widehat\eta^2
\]
is exactly a fixed connection-metric family on the oriented Seifert rational
homology sphere $\widehat Y$, with $r\le2$.  Theorem~\ref{thm:main} therefore
provides $\varepsilon_0>0$ such that
$(\widehat Y,p^*g_\varepsilon)$ admits no nonzero $\Ztwo$-harmonic one-form
when $0<\varepsilon<\varepsilon_0$.

Finally, a nonzero $\Ztwo$-harmonic one-form $(Z,I,v)$ on
$(Y,g_\varepsilon)$ would pull back to
\[
 \bigl(p^{-1}(Z),p^*I,p^*v\bigr)
\]
on $(\widehat Y,p^*g_\varepsilon)$.  Indeed, $p$ is a finite local isometry:
pullback preserves the flat sign line, commutes with $d_I$ and $d_I^*$, and
preserves the local $L^2$, $W^{1,2}$, and zero-growth conditions on evenly
covered balls.  The pulled-back form is nonzero.
This contradicts the preceding application of Theorem~\ref{thm:main} and
proves~\textup{(ii)}.
\end{proof}

\end{document}